\newcommand{\be}{\begin{equation}}
\newcommand{\ee}{\end{equation}}
\newcommand{\dalign}[1]{\[\begin{aligned} #1 \end{aligned}\]}
\newcommand{\fq}{\ensuremath{q_{\,\sharp}}}
\newcommand{\soundconj}[1]{{\tt SC}\llbracket\hskip1pt#1\hskip1pt\rrbracket}
\newcommand\Eqxy{W}
\newcommand\qfl{q_{\,\flat}}
\newcommand\cqfl{\check q_{\,\flat}}
\title[On a conjecture of Soundararajan]
{On a conjecture of Soundararajan}
\author[W.~D.~Banks]{William Banks}
\address{Department of Mathematics, 
         University of Missouri, 
         Columbia MO 65211 USA.}
\email{bankswd@missouri.edu}
\author[I.~E.~Shparlinski]{Igor Shparlinski}
\address{Department of Pure Mathematics,
         University of New South Wales,
         Sydney, NSW 2052 Australia}
\email{igor.shparlinski@unsw.edu.au}
\date{\today}
\begin{document}

\begin{abstract}
Building on recent work of A.~Harper (2012), and using
various results of M.-C.~Chang (2014) and H.~Iwaniec (1974) 
on the zero-free regions of \text{$L$-functions}
$L(s,\chi)$ for characters $\chi$ with a smooth modulus $q$,
we establish a conjecture of K.~Soundararajan (2008)
on the distribution of smooth numbers over reduced residue classes
for such moduli $q$. A crucial ingredient in our argument is
that, for such $q$, there is at most one ``problem character''
for which $L(s,\chi)$ has a smaller zero-free region. Similarly,
using the ``Deuring-Heilbronn'' phenomenon on the repelling nature
of zeros of $L$-functions close to one, we also show that 
Soundararajan's conjecture holds for a family of moduli having 
Siegel zeros.
\end{abstract}

\subjclass[2010]{Primary: 11N25, 11N69; Secondary: 11M20}
\keywords{Smooth number, arithmetic progression, smooth modulus}

%

\maketitle

\tableofcontents



{\Large\section{Introduction}}

\subsection{Set-up and background} 

Let $P(n)$ denote the largest prime factor of the natural
number $n\ge 2$, and put $P(1)\defeq 1$.
A number $n$ is said to be \emph{$y$-smooth} if $n$ has no prime
factor exceeding~$y$, that is, if $P(n)\le y$.

Let $\cS(y)$ denote the set of all $y$-smooth numbers,
and let $\cS(x,y)$ be the set of $y$-smooth numbers not
exceeding $x$:
$$
\cS(x,y)\defeq\cS(y)\cap[1,x].
$$
As usual, we use $\Psi(x,y)$ to denote the cardinality of $S(x,y)$. 

In this note, we study the distribution of smooth numbers over
arithmetic progressions $a\bmod q$ with the
coprimality condition $(a,q)=1$. Defining
$$
\Psi(x,y;q,a)\defeq\ssum{n\in\cS(x,y)\\n\equiv a\bmod q}1
\mand
\Psi_q(x,y)\defeq\ssum{n\in\cS(x,y)\\(n,q)=1}1,
$$
it is expected that the asymptotic relation
\be\label{eq:prediction}
\Psi(x,y;q,a)\sim\frac{\Psi_q(x,y)}{\varphi(q)} 
\ee
holds (with $\varphi$ the Euler function) under the
condition $(a,q)=1$, over a wide range in the parameters $x,y,q,a$.

Soundararajan\cite[Conjecture~I(A)]{Sound}
has proposed the following conjecture.

\begin{conjecture}\label{conj:one}
{\rm (Soundararajan)} For any fixed value of $A>0$, if
$q$ is sufficiently large $($depending only on $A)$ with
$q\le y^A$ and $(a,q)=1$, then~\eqref{eq:prediction} holds as $\log x/\log q\to\infty$. 
\end{conjecture}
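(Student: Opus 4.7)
The plan is to proceed via orthogonality of Dirichlet characters modulo $q$, writing
\[
\Psi(x,y;q,a) = \frac{1}{\varphi(q)}\sum_{\chi \bmod q} \bar\chi(a)\,\Psi(x,y;\chi),
\qquad \Psi(x,y;\chi)\defeq\ssum{n\in\cS(x,y)}\chi(n).
\]
The principal character $\chi_0$ contributes exactly $\Psi_q(x,y)/\varphi(q)$, so \eqref{eq:prediction} reduces to proving
\[
\Bigl|\sum_{\chi\ne\chi_0}\bar\chi(a)\,\Psi(x,y;\chi)\Bigr| = o\bigl(\Psi_q(x,y)\bigr)
\]
uniformly for $q\le y^A$ as $\log x/\log q\to\infty$. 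To bound an individual $\Psi(x,y;\chi)$ I would follow the analytic route developed by Harper: represent $\Psi(x,y;\chi)$ by a Perron-type contour integral of the twisted Dirichlet series $\sum_{n\in\cS(y)}\chi(n)n^{-s}$ (which factors as an Euler product over $p\le y$ closely related to $L(s,\chi)$), and then shift the contour into a classical zero-free region for $L(s,\chi)$. Combined with Hildebrand--Tenenbaum saddle-point bounds for $\Psi(x,y)$, a Vinogradov--Korobov region together with a log-free zero-density estimate would yield $\Psi(x,y;\chi)\ll\Psi_q(x,y)/(\log q)^B$ for any $B>0$, uniformly over non-principal $\chi \bmod q$, provided no $L(s,\chi)$ has a zero too close to $s=1$.

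The serious obstacle—and the reason Conjecture~\ref{conj:one} is currently open for general $q$—is the possible existence of a \emph{Landau--Siegel zero} $\beta<1$ of $L(s,\chi_1)$ for some real character $\chi_1\bmod q$. For such a $\chi_1$ the contour shift picks up an extra residue, and present technology cannot rule out that $\Psi(x,y;\chi_1)$ is comparable in size to $\Psi_q(x,y)$, which would destroy the desired asymptotic. The natural response is a dichotomy: if no $L$-function modulo $q$ has an exceptional zero, the contour argument goes through simultaneously for all non-principal $\chi$; if one does exist, invoke the Deuring--Heilbronn repulsion theorem, by which the existence of an exceptional zero $\beta$ for $\chi_1$ forces every other zero of every $L(s,\chi)$ with $\chi\bmod q$ into a region $\Re s\le 1-c\log\bigl(1/(1-\beta)\bigr)/\log q$, giving a substantially enlarged zero-free region for all $\chi\ne\chi_1$ and enough cancellation to dominate the $\chi_1$-contribution.

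The hardest part is therefore dealing \emph{directly} with the potential problem character $\chi_1$: one must show either that it cannot exist, or that $\Psi(x,y;\chi_1)=o(\Psi_q(x,y))$ despite the nearby exceptional zero. The Chang--Iwaniec zero-free regions referenced in the abstract resolve the first horn of the dichotomy for moduli $q$ that are sufficiently smooth (there at most one ``problem character'' survives and an unconditional classical region is available for the rest), while the Deuring--Heilbronn analysis resolves the second horn when a Siegel zero is actually present. A fully general proof—valid for \emph{every} $q\le y^A$—would require either an unconditional elimination of Landau--Siegel zeros (far beyond current methods) or a brand-new way to treat $\Psi(x,y;\chi_1)$ that bypasses the zero-free region entirely; this is precisely the barrier that currently confines unconditional progress on Conjecture~\ref{conj:one} to the two special families of moduli announced in the abstract.
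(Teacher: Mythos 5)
The statement you were asked about is Conjecture~\ref{conj:one} itself, i.e.\ Soundararajan's Conjecture, which remains \emph{open}: the paper does not prove it, and only establishes the special cases recorded in Theorems~\ref{thm:chang-style} and~\ref{thm:siegel-style}. Your write-up correctly recognizes this, and the barrier you isolate --- the possible existence of a single ``problem character'' whose $L$-function has a zero very close to $s=1$, against which the Burgess-type and zero-density technology gives no unconditional control for general $q$ once $A\ge 4\sqrt e$ --- is exactly the obstruction the paper identifies. Your outline (orthogonality over characters mod $q$, a Perron integral of $L(s,\chi;y)$ at the Hildebrand--Tenenbaum saddle point $\alpha$, log-free density estimates for the bulk of the characters, and a dichotomy around an exceptional zero) is precisely the Soundararajan--Harper framework that the paper packages as Proposition~\ref{prop:SQA-criterion}: everything reduces to showing that the set $\cA$ in~\eqref{eq:k0defined} of characters with a zero in the rectangle $\sigma>1-k_0/\log q$, $|t|\le q$ contains at most one element.

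Two small points of comparison with what the paper actually does for its partial results. First, when the single problem character $\chi_\bullet$ does survive, the paper does not need the exceptional zero to ``repel'' the contour for $\chi_\bullet$ itself; rather, Harper's Rodosski\u{\i} Bound~2 handles one problem character directly (this works because $\chi_\bullet$ is necessarily real, so its order $2$ exceeds $B=|\cA|=1$), which is the mechanism behind your second horn. Second, in Theorem~\ref{thm:siegel-style} the Deuring--Heilbronn repulsion is used only to clear \emph{all other} characters out of the problem range, not to estimate $\Psi(x,y;\chi_\bullet)$; and in Theorem~\ref{thm:chang-style} the reduction of $\cA$ to at most one character comes from Chang's character-sum bounds fed into Iwaniec's zero-free-region machinery for smooth moduli, with no exceptional zero assumed. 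Since no complete proof of the conjecture exists, your proposal cannot be faulted for not supplying one; as a diagnosis of why the problem is hard and of how the paper's two theorems circumvent the difficulty for special families of moduli, it is accurate.
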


Earlier, Granville~\cite{Gran1, Gran2} had established this result
for $A<1$, and he pointed out that the proof for
arbitrarily large values of $A$ must lie fairly deep, for it implies
Vinogradov's conjecture that the least quadratic
nonresidue  modulo $p$ is of size $p^{o(1)}$. 
Soundararajan~\cite{Sound} has shown that~\eqref{eq:prediction} holds for moduli
$$
q\le y^{4\sqrt{e}-\eps}
$$
provided that
\be\label{eq:ticklish}
y^{(\log\log y)^4}\le x\le \exp(y^{1-\eps}).
\ee
In~\cite{Harper} Harper demonstrates how to remove the 
conditions~\eqref{eq:ticklish}, thereby
settling Soundararajan's Conjecture for all $A<4\sqrt{e}$.

Short of an improvement of the Burgess bound on character sums,
one can consider the following variant of Soundararajan's Conjecture
in which the moduli all belong to a prescribed subset $\cQ$ of
the natural numbers $\N$.

\bigskip\noindent{\sc Hypothesis $\soundconj{\cQ}$.} 
Let $\cQ\subseteq\N$. For every fixed $A>0$, if $q$ is
sufficiently large $($depending only on $A)$ with
$$
q\le y^A,\qquad(a,q)=1,\qquad q\in\cQ,
$$
then~\eqref{eq:prediction} holds as $\log x/\log q\to\infty$.
\bigskip

Note that Soundararajan's Conjecture is nothing but
$\soundconj{\N}$, and it is clear that $\soundconj{\N}$ holds if
and only if $\soundconj{\cQ}$ is true for every
set $\cQ\subseteq\N$. We say that \emph{Soundararajan's Conjecture
holds over $\cQ$} whenever $\soundconj{\cQ}$ is true.

\subsection{New results} 

We start by establishing $\soundconj{\cQ}$ for a class
of sufficiently smooth moduli.

\begin{theorem}\label{thm:chang-style}
For every fixed value of $A>0$, there is a number $Q_A>0$
$($depending only on $A)$ for which the following holds.
If
\be\label{eq:ch-restrict}
P(q)^{Q_A}<q\le y^A\mand (a,q)=1,
\ee
then the asymptotic relation~\eqref{eq:prediction} holds as $\log x/\log q\to\infty$.
\end{theorem}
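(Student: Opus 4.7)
The plan is to start from the orthogonality-based identity
\[
\Psi(x,y;q,a) - \frac{\Psi_q(x,y)}{\varphi(q)}
= \frac{1}{\varphi(q)}\sum_{\chi\ne\chi_0}\bar\chi(a)\,\Psi(x,y;\chi),
\qquad
\Psi(x,y;\chi)\defeq\sum_{n\in\cS(x,y)}\chi(n),
\]
and to show that, under the hypotheses of the theorem, the right-hand side is of smaller order than $\Psi_q(x,y)/\varphi(q)$. This reduces the problem to a nontrivial bound on the twisted sum $\Psi(x,y;\chi)$ for every non-principal $\chi\bmod q$.

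To attack these twisted sums I would follow Harper's method, which in turn builds on the work of Soundararajan: represent $\Psi(x,y;\chi)$ via a Perron-type contour integral involving $L(s,\chi)$ together with a truncated Euler product over primes $\le y$, and then shift the contour into a zero-free region of $L(s,\chi)$. The savings produced by the shift depend directly on the width of the available zero-free region. With the classical Siegel-type region the method handles $q$ up to $y^{4\sqrt{e}-\eps}$, which is precisely the range covered in previous work. To reach $q\le y^A$ for arbitrary $A$, a strictly larger zero-free region is required.

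The smoothness hypothesis $P(q)^{Q_A}<q$ lets one invoke the zero-free-region results of Chang and of Iwaniec for Dirichlet $L$-functions of smooth modulus; these enlarge the classical region by a factor that grows with $\log q / \log P(q)$, and taking $Q_A$ sufficiently large as a function of $A$ makes the region wide enough for Harper's contour-shift argument to deliver the desired bound throughout $q\le y^A$. Crucially, the Chang--Iwaniec theorems come with a single potential exception: there may exist one ``problem character'' $\chi^\star\bmod q$ whose $L$-function has a smaller zero-free region. For every $\chi\ne\chi_0,\chi^\star$ the contour-integration argument goes through uniformly and yields the required estimate for $\Psi(x,y;\chi)$.

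The principal obstacle is therefore to control the contribution of the potential problem character $\chi^\star$. The strategy is to handle it separately, exploiting the structural constraints forced by its uniqueness — for instance, that $\chi^\star$ is necessarily real, or that it is induced from a character of comparatively small conductor built out of the small prime factors of $q$ — together with the fact that, if $\chi^\star$ exists, the Deuring--Heilbronn repulsion phenomenon pushes every other zero of $L(s,\chi^\star)$ far from $s=1$. Combining a direct estimate for $\Psi(x,y;\chi^\star)$ based on these structural features with the contour-shift bounds for the remaining characters, and summing over $\chi\ne\chi_0$, delivers the required control on the non-principal contribution and completes the proof.
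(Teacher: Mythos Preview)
Your overall architecture matches the paper's: reduce via orthogonality to bounding $\Psi(x,y;\chi)$ for nonprincipal $\chi$, use the Harper--Soundararajan contour-integral machinery, and invoke Chang's character-sum bounds together with Iwaniec's zero-free-region lemmas to show that for smooth moduli with $P(q)^{Q_A}<q$ there is at most one ``problem character'' $\chi_\bullet$ whose $L$-function has a zero in the rectangle $\{\sigma>1-k_0/\log q,\ |t|\le q\}$ with $k_0=\lceil 4A\log A+D\rceil$. That reduction to a single exceptional character is indeed the heart of the argument, and it is exactly what the paper carries out in \S4.1.

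Where your proposal goes wrong is in the treatment of the surviving character $\chi_\bullet$. You invoke the Deuring--Heilbronn repulsion phenomenon, but that tool is not available here: repulsion requires an actual zero extremely close to $s=1$, and in the setting of Theorem~\ref{thm:chang-style} no such zero is assumed (the complementary situation, where a Siegel zero \emph{is} present, is precisely the content of Theorem~\ref{thm:siegel-style}). The paper instead disposes of $\chi_\bullet$ via Harper's \emph{Rodosski\u{\i} Bound~2}, which controls the ratio $L(s,\chi;y)/L(\alpha,\chi_0;y)$ whenever the set $\cA$ of problem characters has cardinality strictly smaller than the order of $\chi$. Since the Chang--Iwaniec input forces $|\cA|\le 1$ and the unique $\chi_\bullet$ (if it exists) is necessarily real, hence of order $2>|\cA|$, Rodosski\u{\i} Bound~2 applies directly and yields $\cI_0(\chi_\bullet)\ll e^{-\Theta(\sqrt{u\log u})}\Eqxy$. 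Your observation that $\chi_\bullet$ is real is exactly the right structural fact, but its role is to feed into Harper's second Rodosski\u{\i} bound, not into any repulsion or small-conductor argument; without naming that ingredient, the last step of your plan is a genuine gap.
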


\begin{corollary}\label{cor:chang-style}
Let $\cQ$ be a set of natural numbers $q$ with the property that
$$
\log P(q)=o(\log q)\qquad(q\to\infty,~q\in\cQ).
$$
Then Soundararajan's Conjecture holds over  $\cQ$. 
\end{corollary}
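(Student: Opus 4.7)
The plan is to deduce Corollary~\ref{cor:chang-style} as an essentially immediate consequence of Theorem~\ref{thm:chang-style}, by unpacking what the hypothesis $\log P(q)=o(\log q)$ guarantees in terms of the threshold $Q_A$ supplied by the theorem.

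Fix $A>0$ and let $Q_A$ be the constant from Theorem~\ref{thm:chang-style}. The condition $\log P(q) = o(\log q)$ as $q\to\infty$ in $\cQ$ means: for every $M>0$ there is $q_0(M)$ such that $\log P(q) < M^{-1}\log q$ for all $q\in\cQ$ with $q\ge q_0(M)$, equivalently $P(q)^{M} < q$. Applying this with $M=Q_A$, I conclude that for every $q\in\cQ$ sufficiently large (depending on $A$), the inequality $P(q)^{Q_A} < q$ is automatic.

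Now take any triple $(x,y,q,a)$ with $q\in\cQ$ large, $(a,q)=1$, $q\le y^A$, and $\log x/\log q\to\infty$. The discussion above shows $P(q)^{Q_A}<q\le y^A$, so the hypotheses \eqref{eq:ch-restrict} of Theorem~\ref{thm:chang-style} are satisfied. Invoking the theorem yields the asymptotic \eqref{eq:prediction}, which is exactly the assertion $\soundconj{\cQ}$. Hence Soundararajan's Conjecture holds over $\cQ$.

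There is no serious obstacle: the work has been packaged into Theorem~\ref{thm:chang-style}, and the corollary is just the observation that the quantitative smoothness condition $P(q)^{Q_A}<q$ from that theorem is implied, for each fixed $A$, by the qualitative condition $\log P(q)=o(\log q)$, uniformly once $q\in\cQ$ is large enough in terms of $A$.
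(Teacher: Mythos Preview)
Your proposal is correct and matches the paper's approach: the paper states Corollary~\ref{cor:chang-style} as an immediate consequence of Theorem~\ref{thm:chang-style} without giving an explicit proof, and your argument is precisely the intended one-line deduction that $\log P(q)=o(\log q)$ guarantees $P(q)^{Q_A}<q$ for all sufficiently large $q\in\cQ$.
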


An important special case of Corollary~\ref{cor:chang-style}
is the set $\cQ\defeq p^\N$ consisting of all
powers of a fixed prime $p$.
In fact, our work in the present paper
has been initially motivated by a series of results on
arithmetic problems involving progressions modulo large powers
of a fixed prime. Such results include:
\begin{itemize}
\item bounds on the zero-free regions of
$L$-functions, which leads to results on the distribution of primes in 
arithmetic progressions (see~\cite{BanksShpar1,BanksShpar2,Gal,Iwan}),
\item asymptotic formulas in the Dirichlet problem on sums with the
divisor function over arithmetic progressions modulo $p^n$
(see~\cite{Kh, LSZ}),
\item  asymptotic formulas for moments of $L$-functions
(see~\cite{Mil,MilWhi}).
\end{itemize}

It is clear that, for any given set $\cQ$,
to establish $\soundconj{\cQ}$ one must show that the following
hypothesis holds for all large $A$. 

\bigskip\noindent
{\sc Hypothesis $\soundconj{\cQ,A}$.} 
Fix $\cQ\subseteq\N$ and $A>0$. There is a number
$Q_A>0$ such that if
\be\label{eq:SQA-ass}
Q_A<q\le y^A,\qquad(a,q)=1,\qquad q\in\cQ,
\ee
then~\eqref{eq:prediction} holds as $\log x/\log q\to\infty$.
\bigskip

Since the number of moduli $q\le x$ with $q\ge P(q)^{Q_A}$
is $\Psi(x,x^{1/Q_A})\sim\rho(Q_A)x$, where $\rho$ is the
Dickman function, Theorem~\ref{thm:chang-style} implies that the
following variant of Soundararajan's Conjecture (with $A$ arbitrary but fixed) holds over a set of positive asymptotic density.

\begin{corollary}\label{cor:pos dens}
For any fixed $A>0$, there is a set $\cQ\subseteq\N$
of positive asymptotic density for which
$\soundconj{\cQ,A}$ holds.
\end{corollary}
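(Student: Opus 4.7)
The plan is to use the set of moduli already covered by Theorem~\ref{thm:chang-style}. Fix $A>0$, let $Q_A$ be the constant supplied by that theorem, and set
$$
\cQ \defeq \{q\in\N : P(q)^{Q_A}<q\}.
$$
For any $q\in\cQ$ with $Q_A<q\le y^A$ and $(a,q)=1$, the restriction~\eqref{eq:ch-restrict} is satisfied, so Theorem~\ref{thm:chang-style} delivers~\eqref{eq:prediction} as $\log x/\log q\to\infty$. This is precisely $\soundconj{\cQ,A}$ with the same constant $Q_A$.

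The remaining task is to verify that $\cQ$ has positive asymptotic density. For any $x\ge 2$, if $q\in(x/2,x]$ satisfies $P(q)\le(x/2)^{1/Q_A}$, then $P(q)^{Q_A}\le x/2<q$, and hence $q\in\cQ$. Consequently
$$
\#(\cQ\cap[1,x])\ge \Psi\bigl(x,(x/2)^{1/Q_A}\bigr)-\Psi\bigl(x/2,(x/2)^{1/Q_A}\bigr).
$$
Dickman's theorem, together with the continuity of $\rho$, yields $\Psi(y,y^{1/u})\sim\rho(u)\,y$ as $y\to\infty$ for fixed $u>0$. Applying it to both terms on the right gives
$$
\Psi\bigl(x,(x/2)^{1/Q_A}\bigr)=\bigl(1+o(1)\bigr)\rho(Q_A)\,x \quad\text{and}\quad \Psi\bigl(x/2,(x/2)^{1/Q_A}\bigr)=\bigl(1+o(1)\bigr)\rho(Q_A)\,(x/2).
$$
Therefore $\#(\cQ\cap[1,x])\ge\bigl(1+o(1)\bigr)\rho(Q_A)\,x/2$, and since $\rho(Q_A)>0$ the lower density of $\cQ$ is positive.

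A dyadic refinement---summing the same kind of contribution over intervals $(x\cdot 2^{-k-1},x\cdot 2^{-k}]$ for $k\ge0$---combined with the trivial upper bound $\#(\cQ\cap[1,x])\le\Psi(x,x^{1/Q_A})\sim\rho(Q_A)\,x$, would recover the exact asymptotic $\#(\cQ\cap[1,x])\sim\rho(Q_A)\,x$ asserted in the discussion preceding the corollary, but this is not needed for positivity. I do not anticipate any serious obstacle: once Theorem~\ref{thm:chang-style} is in hand, the proof reduces to a counting exercise grounded in the asymptotics of the Dickman function.
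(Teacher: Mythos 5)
Your proposal is correct and is essentially the paper's own (one-sentence) argument: take $\cQ$ to be the set of moduli with $P(q)^{Q_A}<q$, note that Theorem~\ref{thm:chang-style} gives $\soundconj{\cQ,A}$ for this set, and count its elements via the Dickman function, $\Psi(x,x^{1/Q_A})\sim\rho(Q_A)x$. Your dyadic handling of the (minor) distinction between the conditions $P(q)\le q^{1/Q_A}$ and $P(q)\le x^{1/Q_A}$ is a bit more careful than the paper's phrasing, but the approach is identical.
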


Corollary~\ref{cor:pos dens} complements certain
Bombieri-Vinogradov type results due to Granville and
Shao~\cite{GranShao} and Harper~\cite{Harper2},
which imply \eqref{eq:prediction} for a set of moduli~$q$ of
asymptotic density one, but in more restrictive ranges of $y$.
For example, none of those results apply to very smooth numbers
with (say) $y$ of size $(\log x)^{o(1)}$.

The next result asserts that the asymptotic relation~\eqref{eq:prediction} holds as $q$ varies over a set
of exceptional moduli.

\begin{theorem}\label{thm:siegel-style}
For every fixed value of $A>0$, there is a number $Q_A>0$
$($depending only on $A)$ for which the following holds.
If
\be\label{eq:restrictions2}
Q_A<q\le y^A\mand (a,q)=1,
\ee
and there is a character $\chi$ modulo $q$ such that
$L(s,\chi)$ has a zero $\beta+i\gamma$ of $L(s,\chi)$
satisfying
\be\label{eq:except-zero-beta}
\beta>1-\frac{Q_A^{-1}}{\log q(|\gamma|+3)},
\ee
then the asymptotic relation~\eqref{eq:prediction} holds as $\log x/\log q\to\infty$.
\end{theorem}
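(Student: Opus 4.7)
The plan is to follow the same template as for Theorem~\ref{thm:chang-style}, with the ``Deuring--Heilbronn'' repulsion phenomenon substituted for the Chang--Iwaniec zero-density estimates that are available for smooth moduli. Starting from the orthogonality relation
\[
\Psi(x,y;q,a) - \frac{\Psi_q(x,y)}{\varphi(q)}
= \frac{1}{\varphi(q)} \sum_{\chi \ne \chi_0} \bar{\chi}(a)\, \Psi(x,y;\chi),
\qquad
\Psi(x,y;\chi) \defeq \sum_{n\in \cS(x,y)} \chi(n),
\]
the problem reduces to showing $\Psi(x,y;\chi) = o(\Psi_q(x,y))$ uniformly over all non-principal characters $\chi \bmod q$.

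Let $\chi^\ast$ denote the character whose $L$-function carries the near-to-one zero $\rho_0 = \beta + i\gamma$ of \eqref{eq:except-zero-beta}; Deuring--Heilbronn itself guarantees that $\chi^\ast$ is essentially unique once $Q_A$ is large. For every other non-principal $\chi$, the effective form of Deuring--Heilbronn (going back to Linnik and Knapowski) converts the closeness of $\rho_0$ to the line $\operatorname{Re} s = 1$ into an enlarged zero-free region for $L(s,\chi)$ of shape
\[
\operatorname{Re}(s) > 1 - \frac{c\log(1/(1-\beta))}{\log q \cdot(|\operatorname{Im} s|+2)}
\]
for some absolute $c>0$. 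For $Q_A$ sufficiently large this region is broader than the zero-free region that Harper's proof of Soundararajan's Conjecture requires as input, so Harper's argument applies verbatim and yields $\Psi(x,y;\chi) = o(\Psi_q(x,y))$ for every such $\chi$.

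The remaining task is to bound $\Psi(x,y;\chi^\ast)$ itself, where the zero-free region of $L(s,\chi^\ast)$ is obstructed by $\rho_0$. The key observation is that the presence of $\rho_0$ so near $\operatorname{Re} s = 1$ forces $L(s,\chi^\ast)$ to be \emph{small} on a neighbourhood of the segment $[\beta,1]$, or equivalently forces $\chi^\ast$ to be pretentious (in the sense of Granville--Soundararajan) to $n^{i\gamma}$ at small primes. Inserting either formulation into the Hildebrand--Tenenbaum saddle-point analysis used in Harper's treatment of the ``problem character'' for Theorem~\ref{thm:chang-style} should yield the desired estimate $\Psi(x,y;\chi^\ast) = o(\Psi_q(x,y))$.

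The main obstacle I expect is making this last step quantitatively precise. In the classical Siegel case $\gamma = 0$ with $\chi^\ast$ real, the smallness of $L(s,\chi^\ast)$ near $s=1$ translates cleanly into cancellation in $\sum_{n \in \cS(x,y)} \chi^\ast(n)$; but \eqref{eq:except-zero-beta} allows $|\gamma|$ to be nonzero, and one must balance the imaginary shift $i\gamma$ carefully against the smoothness parameter $y$ and the range $\log x/\log q$ to ensure that the Halász--Hildebrand--Tenenbaum machinery still produces the required cancellation in the untwisted character sum rather than in a twist by $n^{-i\gamma}$.
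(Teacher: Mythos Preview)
Your treatment of the non-exceptional characters is exactly what the paper does: the classical zero-free region (Lemma~\ref{lem:gron-land-titch}) pins down the exceptional character $\chi^\ast$ uniquely once $Q_A^{-1}\le c_1$, and then Linnik's Deuring--Heilbronn phenomenon (Lemma~\ref{lem:linnik}) pushes every other $L(s,\chi)$ zero-free past $1-c_2\log Q_A/\log q(|t|+3)$, which for large $Q_A$ clears the threshold $1-k_0/\log q$ that Harper's machinery (packaged here as Proposition~\ref{prop:SQA-criterion}) requires.

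Where you diverge from the paper is in the handling of $\chi^\ast$ itself, and here you are making unnecessary work for yourself. First, the worry about $\gamma\ne 0$ evaporates immediately: the classical zero-free region already forces any zero satisfying~\eqref{eq:except-zero-beta} with $Q_A^{-1}\le c_1$ to be real and simple, and $\chi^\ast$ to be a real character. Second, and more to the point, you do not need any new pretentiousness or Hal\'asz-type input for $\chi^\ast$. The whole architecture of Proposition~\ref{prop:SQA-criterion} is that Harper's argument \emph{already} absorbs a single problem character: his Rodosski\u{\i} Bound~2 (applied in Cases~4--6 of the proof of Proposition~\ref{prop:SQA-criterion}) gives $\cI_0(\chi_\bullet)=o(\Eqxy)$ directly, with no assumption on the zeros of $L(s,\chi_\bullet)$ beyond $\chi_\bullet$ being the unique member of the problem range. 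The crucial fact making this work is that $\chi_\bullet$ has order two, so the set $\cB$ in Harper's \S2.5 is empty.

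So the ``main obstacle'' you identify is not an obstacle at all: once you have shown that $|\cA|\le 1$, Proposition~\ref{prop:SQA-criterion} closes the argument with no further analysis of $\chi^\ast$. The paper's proof of Theorem~\ref{thm:siegel-style} is correspondingly short---about ten lines.
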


\begin{corollary}\label{cor:siegel-style}
Let $\cQ$ be a set of natural numbers such that,
for every $q\in\cQ$, there is a character $\chi$ modulo $q$
and a real zero $\beta_q$ of $L(s,\chi)$ satisfying
$$
(1-\beta_q)\log q=o(1)\qquad(q\to\infty,~q\in\cQ).
$$
Then Soundararajan's Conjecture holds over  $\cQ$. 
\end{corollary}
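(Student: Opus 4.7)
The plan is to derive Corollary~\ref{cor:siegel-style} directly from Theorem~\ref{thm:siegel-style}, since the hypothesis placed on the set $\cQ$ is tailored precisely to supply, for each $q \in \cQ$ sufficiently large, the exceptional zero that Theorem~\ref{thm:siegel-style} requires. To verify $\soundconj{\cQ}$ it suffices to take an arbitrary fixed $A > 0$ and check that \eqref{eq:prediction} holds for all sufficiently large $q \in \cQ$ with $q \le y^A$ and $(a,q) = 1$, as $\log x/\log q \to \infty$.

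First I would let $Q_A > 0$ be the constant produced by Theorem~\ref{thm:siegel-style}. The assumption $(1 - \beta_q)\log q = o(1)$ as $q \to \infty$ in $\cQ$ furnishes a threshold $q_0 = q_0(A)$ such that
\[
(1 - \beta_q)\log q < (3Q_A)^{-1} \qquad \text{for all } q \in \cQ \text{ with } q > q_0;
\]
enlarging $q_0$ if necessary, I may also insist $q_0 \ge Q_A$.

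Next I would exploit the fact that $\beta_q$ is a \emph{real} zero, so the corresponding zero of $L(s,\chi)$ has imaginary part $\gamma = 0$ and hence $|\gamma| + 3 = 3$. Rearranging the displayed inequality gives
\[
\beta_q > 1 - \frac{1}{3 Q_A \log q} \;=\; 1 - \frac{Q_A^{-1}}{\log q\,(|\gamma|+3)},
\]
which is precisely condition \eqref{eq:except-zero-beta}. For any $q \in \cQ$ with $q > q_0$, $q \le y^A$, and $(a,q) = 1$, all the hypotheses of Theorem~\ref{thm:siegel-style} are therefore met, and the theorem delivers \eqref{eq:prediction} in the regime $\log x/\log q \to \infty$. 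Since $A$ was arbitrary, this establishes $\soundconj{\cQ}$ and proves the corollary.

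There is no serious obstacle here: the substance of the argument is already contained in Theorem~\ref{thm:siegel-style}, and the corollary is essentially a matter of reading off the hypothesis. The only thing worth being careful about is the constant chasing through $Q_A$ and $q_0(A)$, together with the use of the realness of $\beta_q$ to turn the size bound $(1-\beta_q)\log q = o(1)$ into the $(|\gamma|+3)$-weighted form demanded by \eqref{eq:except-zero-beta}.
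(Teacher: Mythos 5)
Your proposal is correct and matches the intended derivation: the corollary is an immediate consequence of Theorem~\ref{thm:siegel-style}, obtained exactly as you describe by using the $o(1)$ hypothesis to verify \eqref{eq:except-zero-beta} with $\gamma=0$ once $q\in\cQ$ is large enough. One cosmetic point: in the paper's notation $\log q(|\gamma|+3)$ means $\log\bigl(q(|\gamma|+3)\bigr)=\log(3q)$ when $\gamma=0$, not $3\log q$, but since $\log(3q)\le 3\log q$ your inequality still implies \eqref{eq:except-zero-beta}.
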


In particular, Corollary~\ref{cor:siegel-style}
shows that any future work on 
Soundararajan's Conjecture (over $\N$) can assume that
Siegel zeros do not exist.

\begin{remark}
It also true that Soundararajan's Conjecture is
true if one assumes the Extended Riemann Hypothesis. This is easily
proved using Proposition~\ref{prop:SQA-criterion} in
\S\ref{sec:ests-various-ranges} below.
\end{remark}

Our proofs of Theorems~\ref{thm:chang-style} and
\ref{thm:siegel-style} rely on the argument of Harper~\cite{Harper}
(which in turn builds upon original ideas of 
Soundararajan~\cite{Sound}). The treatment of
the so-called ``problem range'' is the primary issue
(see \S\ref{sec:init-discuss} below), thus
a major part of the proof of Theorem~\ref{thm:chang-style}
is devoted to elimination of this range. This is accomplished
via a combination of results of Chang~\cite{Chang} and 
Iwaniec~\cite{Iwan}, which give bounds on certain character sums
and on the zero-free regions of $L$-functions modulo highly composite
integers. We remark that, for a slightly more restrictive class
of moduli, some stronger bounds have been obtained by
the authors (see~\cite{BanksShpar1,BanksShpar2}), but these do not
lead to better results on Soundararajan's Conjecture.
Concerning Theorem~\ref{thm:siegel-style}, our proof exploits
the ``Deuring-Heilbronn'' phenomenon on the repelling nature of
zeros of $L$-functions close to one.


{\Large\section{Preliminaries}}

\subsection{Notation}
In what follows, given functions $F$ and $G>0$ we use
the equivalent notations $F=O(G)$ and $F\ll G$ to signify that the
inequality $|F|\le c\,G$ holds with some constant $c>0$.
Throughout the paper, any implied constants may depend on the
parameters $A$ and $\Phi$ but are independent of other variables. 

We also write $F\asymp G$ or $F=\Theta(G)$ 
whenever $F,G>0$ and we have both $F=O(G)$ and $G=O(F)$.

The notations $F\sim G$ and $F=o(G)$ are used to indicate that 
$F/G\to 1$ and $F/G\to 0$, respectively, as certain specified
parameters tend to infinity.


\subsection{Initial discussion}
\label{sec:init-discuss}

Harper's theorem~\cite[Theorem~1]{Harper} implies
$\soundconj{\cQ,A}$ for any set $\cQ$ and any $A<4\sqrt{e}$.
Thus, in what follows we can assume that
$A\ge 4\sqrt{e}$ and $y^{4\sqrt{e}}\le q\le y^A$.
In particular, the parameters
$$
u\defeq\frac{\log x}{\log y}
\mand
v\defeq\frac{\log x}{\log q}
$$
are comparable in size (that is, $u\asymp v$)
since $A\ge u/v\ge 4\sqrt{e}$,
and so $u\to\infty$ if and only if $v\to\infty$.

We remark that, in this section and the next, $y$
is sometimes required to exceed a large number
that might depend on $A$. However, in view of~\eqref{eq:SQA-ass},
we can begin by taking $Q_A$ large enough to guarantee
that $y$ meets these requirements.
 
For any character $\chi$ modulo $q$ we put
$$
\Psi(x,y;\chi)\defeq\ssum{n\in\cS(x,y)}\chi(n).
$$
In particular, $\Psi_q(x,y)=\Psi(x,y;\chi_0)$, where
$\chi_0$ is the principal character. Using
Dirichlet orthogonality we see that~\eqref{eq:prediction}
is equivalent to the assertion that
\be\label{eq:translation}
\ssum{\chi\ne\chi_0}\overline\chi(a)\Psi(x,y;\chi)
=o\bigl(\Psi(x,y;\chi_0)\bigr)\qquad
(u\to\infty),
\ee  

As in~\cite{Harper,Sound} it suffices to
establish a smooth variant of~\eqref{eq:translation}. 
More precisely, let $\Phi:[0,\infty)\to[0,1]$
be a function that is
supported on $[0,2]$, equal to one on $[0,\tfrac12]$,
and such that $\Phi\in C^9$, that is, $\Phi$ is
nine times continuously differentiable.
For every character $\chi$ modulo $q$ we denote
$$
\Psi(x,y;\chi,\Phi)
\defeq\ssum{n\in\cS(y)}\chi(n)\Phi(n/x).
$$
Then, it is enough show that
\be\label{eq:trans-with-Phi}
\ssum{\chi\ne\chi_0}\overline\chi(a)\Psi(x,y;\chi,\Phi)
=o\bigl(\Psi(x,y;\chi_0,\Phi)\bigr)\qquad
(u\to\infty)
\ee
holds, since the passage from~\eqref{eq:trans-with-Phi} back to~\eqref{eq:translation} can be accomplished using the unsmoothing
method outlined by Harper~\cite[Appendix~A]{Harper}.

As in~\cite{Harper,Sound} we start by writing
\be\label{eq:owl}
\Psi(x,y;\chi,\Phi)
=\frac{1}{2\pi i}\int_{c-i\infty}^{c+i\infty}
L(s,\chi;y)x^s\breve\Phi(s)\,ds\qquad(c>0)
\ee
for any $c>0$, where
$$
L(s,\chi;y)\defeq\prod_{p\le y}(1-\chi(p)p^{-s})^{-1}
=\sum_{n\in\cS(y)}\chi(n)n^{-s}
$$
and
$$
\breve\Phi(s)\defeq\int_0^\infty\Phi(t)t^{s-1}\,dt.
$$
Note that the bound
\be\label{eq:brevePhibound}
\breve\Phi(s)\ll |s|^{-1}(|s|+1)^{-8}
\ee
follows from our smoothness
assumption on $\Phi$ (using integration by parts and the continuity
of $\Phi^{(9)}$). We choose $c$ to be $\alpha\defeq\alpha(x,y)$,
the unique positive solution to the equation
$$
\sum_{p\le y}\frac{\log p}{p^\alpha-1}=\log x.
$$ 
The quantity $\alpha$ is introduced in a saddle point argument of
Hildebrand and Tenenbaum~\cite{HilTen1} for $\Psi(x,y)$
(see also~\cite{HilTen2}), and it has been
applied by de la Bret\'eche and Tenenbaum~\cite{delaBTenen}
to $\Psi_q(x,y)$ for an arbitrary modulus $q$.
Using only the trivial bound $\omega(q)\ll\log q$
on the number of distinct prime factors of~$q$,
from~\eqref{eq:SQA-ass} it follows immediately that
$\omega(q)\ll y^{1/2}/\log y$ (the implied constant is
independent of $A$ if (say) $Q_A\ge A^{4A}$). Therefore,
$q$ satisfies one of the conditions $(C_1)$ or $(C_2)$ of~\cite[Corollaire~2.2]{delaBTenen}, 
and so an application of~\cite[Th\'eor\`eme~2.1]{delaBTenen} allows us to conclude that

\be\label{eq:psychobabble}
\Psi(x,y;\chi_0)\asymp\Psi(x,y)\prod_{p\,\mid\,q}(1-p^{-\alpha})
\ee
provided both quantities $y$ and $u$
exceed a certain absolute constant; note that
the implied constants in~\eqref{eq:psychobabble} are absolute.
Combining~\eqref{eq:psychobabble} with~\cite[Theorem~1]{HilTen1}
it follows that 
\be\label{eq:airplants}
\Psi(x,y;\chi_0)
\asymp\Eqxy\defeq\frac{x^\alpha L(\alpha,\chi_0;y)}
{\alpha\sqrt{(1+\log x/y)\log x\log y}}.
\ee
Since $0<\alpha\ll 1$
the quantities $\Psi(\tfrac12x,y;\chi_0)$ and $\Psi(2x,y;\chi_0)$ are 
comparable in size; thus, as $\ind{[0,\frac12]}\le\Phi\le\ind{[0,2]}$ 
one finds that
$$
\Psi(x,y;\chi_0,\Phi)\asymp\Eqxy.
$$

Following~\cite{Harper, Sound} we now denote
$$
\Xi_q(k)\defeq\cX_q(k)\setminus\cX_q(k+1)
\qquad(0\le k\le\tfrac12\log q)
$$
with
$$
\cX_q(k)\defeq\bigl\{\chi\bmod q:
\chi\ne\chi_0,\ L(\sigma+it,\chi)\ne 0\ \text{for}\ 
\sigma>1-k/\log q,\ |t|\le q\bigr\}.
$$
Using~\eqref{eq:owl} (with $c\defeq\alpha$), the sum on the left
side of~\eqref{eq:trans-with-Phi} satisfies the bound
$$
\ssum{\chi\ne\chi_0}\overline\chi(a)\Psi(x,y;\chi,\Phi)\ll 
\sum_{0\le k\le\frac12\log q}
|\Xi_q(k)|\max\limits_{\chi\in\Xi_q(k)}
\biggl|\int_{\alpha-i\infty}^{\alpha+i\infty}
L(s,\chi;y)x^s\breve\Phi(s)\,ds\biggr|.
$$

For the specific moduli considered in
Theorem~\ref{thm:chang-style} and in
Theorem~\ref{thm:siegel-style}, we show
that the $L$-functions $L(s,\chi)$ attached to
characters $\chi$ modulo $q$  have \emph{no zeros close to one},
with at most one exceptional ``problem character'' $\chi_\bullet$
(in the sense of~\cite{Sound}). More precisely, we need to know
(see Proposition~\ref{prop:SQA-criterion}
 in \S\ref{sec:ests-various-ranges} below):
\be\label{eq:k0defined}
\cA\defeq
\bigcup\limits_{k<k_0}\Xi_q(k)=\varnothing\text{~or~}\{\chi_\bullet\}
\qquad\text{with}\quad
k_0\defeq \rf{4A\log A+D},
\ee
where $D$ is the absolute constant described
in~\cite[Rodosski\u{\i} Bound~1]{Harper}.
This means that the ``problem range'' (in the sense of 
Harper~\cite{Harper}) can be handled easily
(this is definitely \emph{not} the
case in situations where~\eqref{eq:k0defined} fails).
It is precisely for this reason that we have been able to
remove the obstruction $A<4\sqrt{e}$ encountered in 
the previous papers~\cite{Harper, Sound}
in the case that $\cQ\defeq\N$.

Assume from now on that $\cQ$ is a set of
natural numbers such that~\eqref{eq:k0defined} holds
for every $q\in\cQ$.
Taking the above considerations into account,
and introducing the notation
$$
\cI(\chi)\defeq\int_{\alpha-i\infty}^{\alpha+i\infty}
L(s,\chi;y)x^s\breve\Phi(s)\,ds\qquad(\chi\ne\chi_0),
$$
to verify~\eqref{eq:trans-with-Phi} (and 
establish $\soundconj{\cQ,A}$) it suffices to show that
\be\label{eq:bitcoin1}
\big|\cI(\chi_\bullet)\big|
=o(\Eqxy)\qquad(u\to\infty)
\ee
holds for a problem character $\chi_\bullet\in\cA$, and that
\be\label{eq:bitcoin2}
\sum_{k\ge k_0}
|\Xi_q(k)|\max\limits_{\chi\in\Xi_q(k)}
\big|\cI(\chi)\big|
=o(\Eqxy)\qquad(u\to\infty),
\ee 
where $\Eqxy$ is defined by~\eqref{eq:airplants}.
For the most part, the results we need are already contained in~\cite{Harper}; these are briefly reviewed in
\S\ref{sec:ests-various-ranges}.

In what follows, we use the terminology (cf.~\cite[p.~184]{Harper}) 
\begin{itemize}
\item $y$ is ``large'' if $e^{(\log x)^{0.1}}<y\le x$;
\item $y$ is ``small'' if
$(\log\log x)^3\le y\le e^{(\log x)^{0.1}}$;
\item $y$ is ``very small'' if $y<(\log\log x)^3$.
\end{itemize}
We also say (cf.~\cite[p.~186]{Harper}) that $k$ lies in
\begin{itemize}
\item the ``basic range'' if $\sqrt{u}\le k\le\tfrac12\log q$;
\item the ``Rodosski\u{\i} range'' if
$k_0\le k<\sqrt{u}$,
\item the ``problem range'' if $0\le k<k_0$,
\end{itemize}
where $k_0\defeq \rf{4A\log A+D}$ as in~\eqref{eq:k0defined}.
Again, we emphasize that our proofs of
Theorems~\ref{thm:chang-style} and~\ref{thm:siegel-style}
essentially amount to showing that the problem range contains
\emph{at most one} character $\chi_\bullet$ in each case.


\subsection{Reduction to characters in the problem range} 
\label{sec:ests-various-ranges}

Both Theorems~\ref{thm:chang-style} and~\ref{thm:siegel-style} follow 
from the following general statement (which does not assume
anything about the arithmetic structure of the modulus $q$).

\begin{proposition}\label{prop:SQA-criterion}
Fix $\cQ\subseteq\N$ and $A>0$. Suppose that, for every
$q\in\cQ$, there is at most one nonprincipal
character $\chi_\bullet$ modulo $q$
for which the $L$-function $L(s,\chi_\bullet)$ has a zero
$\beta+i\gamma$ in the rectangle
$$
\beta>1-\frac{\rf{4A\log A+D}}{\log q},\qquad
|\gamma|\le q.
$$
Then $\soundconj{\cQ,A}$ is true.
\end{proposition}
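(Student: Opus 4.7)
The plan is to combine the reduction already carried out in \S\ref{sec:init-discuss} with the character-sum and zero-density estimates imported from~\cite{Harper}. The hypothesis of the proposition is precisely~\eqref{eq:k0defined}: for every $q\in\cQ$, the set $\cA=\bigcup_{k<k_0}\Xi_q(k)$ is either empty or equals $\{\chi_\bullet\}$. By the chain of reductions set out in \S\ref{sec:init-discuss}, it is therefore enough to verify the two displayed estimates~\eqref{eq:bitcoin1} and~\eqref{eq:bitcoin2}; once these are in place, $\soundconj{\cQ,A}$ follows.

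To dispose of~\eqref{eq:bitcoin2}, I would split the sum at $k=\sqrt{u}$ into the Rodosski\u{\i} range $k_0\le k<\sqrt{u}$ and the basic range $\sqrt{u}\le k\le\tfrac12\log q$. In each case the contribution factors as a zero-density bound of the shape $|\Xi_q(k)|\le e^{O(k)}$ (which Harper borrows from standard work) times a pointwise bound on $|\cI(\chi)|$ obtained by shifting the contour in~\eqref{eq:owl} and exploiting the decay~\eqref{eq:brevePhibound} of $\breve\Phi$. In the basic range, the absence of zeros with $\Re s>1-k/\log q$ yields a saving of roughly $x^{-k/\log q}=e^{-kv}$ relative to $\Eqxy$, and since $v\asymp u\to\infty$ this easily defeats $e^{O(k)}$ after geometric summation in $k$. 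In the Rodosski\u{\i} range, Harper's ``Rodosski\u{\i} Bound~1'' combined with the saddle-point expression~\eqref{eq:airplants} for $\Eqxy$ produces a saving that starts to dominate the zero count exactly once $k\ge k_0=\rf{4A\log A+D}$; this is the very reason for the shape of $k_0$.

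To dispose of~\eqref{eq:bitcoin1}, observe that if $\cA=\varnothing$ there is nothing to prove, whereas if $\cA=\{\chi_\bullet\}$ one must bound the single integral $|\cI(\chi_\bullet)|$. Because only one character is involved, one can afford a direct pointwise estimate on $|L(\alpha+it,\chi_\bullet;y)|$ along the line $\Re s=\alpha$, of the type carried out in Harper's treatment of the lone problem-character contribution in the regime $A<4\sqrt{e}$; integrating this against $\breve\Phi$ and using~\eqref{eq:brevePhibound} then delivers $|\cI(\chi_\bullet)|=o(\Eqxy)$.

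The main obstacle I expect to confront is the Rodosski\u{\i} range: one must check that the $k$-saving provided by the Rodosski\u{\i} bound really does dominate the exponential zero count $e^{O(k)}$ uniformly across the enlarged interval $y^{4\sqrt{e}}\le q\le y^A$ for arbitrary fixed $A$. This is precisely the reason $k_0$ is allowed to grow like $A\log A$: enlarging the problem range is what lets the argument extend past $A=4\sqrt{e}$, and the hypothesis of the proposition is exactly the statement that this enlarged problem range reduces to a single character $\chi_\bullet$ that can then be handled individually, rather than the arbitrarily many exceptional characters that would otherwise obstruct the analysis.
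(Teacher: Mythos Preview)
Your overall strategy matches the paper's: reduce to \eqref{eq:bitcoin1} and \eqref{eq:bitcoin2} via the hypothesis \eqref{eq:k0defined}, then control the sum over $k\ge k_0$ by combining a log-free zero-density bound with pointwise estimates on $|\cI(\chi)|$ taken from~\cite{Harper}. However, two substantive pieces are missing.

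First, the argument must be split according to the size of $y$ relative to $x$. The paper (following~\cite{Harper}) distinguishes $y$ ``large'' ($y>e^{(\log x)^{0.1}}$), ``small'' ($(\log\log x)^3\le y\le e^{(\log x)^{0.1}}$), and ``very small'' ($y<(\log\log x)^3$), and the mechanism for bounding $\cI(\chi)$ is genuinely different in each. In particular, for very small $y$ the basic/Rodosski\u{\i} dichotomy you describe is not used at all; instead $\cI(\chi)$ is split as $\cJ_\infty(\chi)+\cJ_0(\chi)$ at $|t|=1$ and one exploits $q\le y^A\le (\log\log x)^{3A}$. Your sketch implicitly covers only the large-$y$ regime. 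Relatedly, in the basic range the paper does not shift the contour to extract a saving $e^{-kv}$; it quotes Harper's bound $\cI_0(\chi)\ll\{(yq^{1.99})^{-1}+e^{-(C_2+1)k}\}\Eqxy$ and pairs this with $|\Xi_q(k)|\le C_1e^{C_2k}$. Your contour-shift heuristic is plausible when $\alpha$ is near $1$, but for small $y$ one has $\alpha=\Theta(y/(\log x\log y))$ and the shift no longer makes sense.

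Second, your handling of the single problem character $\chi_\bullet$ is not yet an argument. The paper's bound on $\cI_0(\chi_\bullet)$ (and the analogues for small and very small $y$) rests on~\cite[Rodosski\u{\i} Bound~2]{Harper}, which is applicable precisely because the order of $\chi_\bullet$ (at least two, since $\chi_\bullet\ne\chi_0$) strictly exceeds $|\cA|=1$; equivalently, the auxiliary set $\cB$ of~\cite[\S2.5]{Harper} is empty. This is the crux of why the hypothesis ``at most one exceptional character'' suffices, and it needs to be stated explicitly rather than subsumed into ``one can afford a direct pointwise estimate''.
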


\begin{proof} 
We outline what is needed to
establish~\eqref{eq:bitcoin1} and~\eqref{eq:bitcoin2}
for $y$ lying in various ranges; the proposition follows.
The underlying ideas are due to Soundararajan~\cite{Sound},
and subsequent refinements are due to Harper~\cite{Harper}.
To begin, when $y$ is \emph{not} very small, we express
$\cI(\chi)$ as a sum 
$$
\cI(\chi) = \cI_-(\chi)+\cI_0(\chi)+\cI_+(\chi),
$$
where the central integral is
$$
\cI_0(\chi)\defeq\int_{\alpha-i(yq)^{1/4}}^{\alpha+i(yq)^{1/4}}
L(s,\chi;y)x^s\check\Phi(s)\,ds,
$$
and the integral tails are given by
$$
\cI_{\pm}(\chi)\defeq\int_{\alpha\pm i(yq)^{1/4}}^{\alpha\pm i\infty}
L(s,\chi;y)x^s\check\Phi(s)\,ds
$$
for either choice of the sign $\pm$.

\bigskip

\noindent{\sc Case 1:} Integral tails with $y$ not very small
and $k$ arbitrary.

\medskip

In view of~\eqref{eq:brevePhibound}, 
one has (cf.~\cite[p.~186]{Harper})
$$
\cI_{\pm}(\chi)\ll\frac{x^\alpha L(\alpha,\chi_0;y)}{y^2q^2}.
$$
Since $\sum_k|\Xi_q(k)|$ is at most the total number of
characters modulo~$q$, that is,
\be\label{eq:sumkXik-bd}
\sum_{k\ge 0}|\Xi_q(k)|\le \varphi(q),
\ee
and we have (cf.~\cite[p.~185]{Harper})
$$
\alpha=\begin{cases}
\Theta\bigl(\frac{y}{\log x\log y}\bigr)
&\quad\hbox{if $y\le\log x$},\\ \\
1-\frac{\log(u\log u)}{\log y}+O(\frac{1}{\log y})
&\quad\hbox{if $y>\log x$},
\end{cases}
$$
it is immediate that
\be\label{eq:tails,notverysmall}
\sum_{k\ge 0}|\Xi_q(k)|
\max\limits_{\chi\in\Xi_q(k)}
\big|\cI_{\pm}(\chi)\big|
\ll  \frac{\Eqxy}{yq},
\ee
where $\Eqxy$ is defined in~\eqref{eq:airplants}.
Since $x\ge u$ and $y\ge(\log\log x)^3$, we have $y\to\infty$
as $u\to\infty$; this implies that the sums in~\eqref{eq:tails,notverysmall} contribute an amount
of size $o(\Eqxy)$ to both~\eqref{eq:bitcoin1} and~\eqref{eq:bitcoin2}.

\bigskip

\noindent{\sc Case 2:} Central integral with $y$ large
and $k$ in the basic range.

\medskip

In addition to~\eqref{eq:sumkXik-bd},
one needs a strong individual bound on $|\Xi_q(k)|$
for smallish values of $k$.
The papers~\cite{Harper,Sound} use
\be\label{eq:logfreedensity}
|\Xi_q(k)|\le C_1e^{C_2k},
\ee
where $C_1,C_2>0$ are certain 
absolute constants, which is a consequence of the
\text{log-free} density estimate for Dirichlet $L$-functions;
see, for example, Iwaniec and Kowalski~\cite[Chapter~18]{IwanKowal}.
In particular, in terms of the same constant $C_2$,
Harper~\cite[p.~189]{Harper} derives the bound
$$
\cI_0(\chi)
\ll \biggl\{\frac{1}{yq^{1.99}}+e^{-(C_2+1)k}\biggr\}\Eqxy
$$
for all sufficiently large $u$. We multiply this bound by $|\Xi_q(k)|$
and then sum over all $k$ in the basic range, taking into account~\eqref{eq:sumkXik-bd} and~\eqref{eq:logfreedensity},
we get that
$$
\sum_{\text{basic~}k}|\Xi_q(k)|\max\limits_{\chi\in\Xi_q(k)}
\big|\cI_0(\chi)\big|
\ll \biggl\{\frac{1}{yq^{0.99}}+e^{-\sqrt{u}}\biggr\}\Eqxy.
$$
As in Case 1, $y\to\infty$ as $u\to\infty$, 
so the sum in this bound contributes an amount
of size $o(\Eqxy)$ to the sum in~\eqref{eq:bitcoin2}.

\bigskip

\noindent{\sc Case 3:} Central integral with $y$ large
and $k$ in the Rodosski\u{\i} range.

\medskip

As an application of his Rodosski\u{\i} Bound~1
(which combines earlier results of Soundararajan~\cite[Lemmas~4.2 and 4.3]{Sound}),
Harper~\cite[p.~189]{Harper} shows that
$$
\cI_0(\chi)
\ll e^{-\Theta(\sqrt{u\log u}\,)}\Eqxy
$$
holds in the present case (one requires that $y/(A+1)^2$
is sufficiently large, which we can assume).
Using~\eqref{eq:logfreedensity} we get that
$$
\sum_{\text{Rodosski\u{\i}~}k}|\Xi_q(k)|
\max\limits_{\chi\in\Xi_q(k)}\big|\cI_0(\chi)\big|
\ll e^{-\Theta(\sqrt{u\log u}\,)}\Eqxy=o(\Eqxy)
\qquad(u\to\infty).
$$

\bigskip

\noindent{\sc Case 4:} Central integral with $y$ large
and $k$ in the problem range.

\medskip

Following Harper~\cite[p.~191]{Harper} we define $\cA$ as
in~\eqref{eq:k0defined} and put $B\defeq |\cA|\le 1$.
When $B=0$, there is nothing to do.
When $B=1$, using~\cite[Rodosski\u{\i} Bound~2]{Harper}
instead of~\cite[Rodosski\u{\i} Bound~1]{Harper}, and
arguing as in~\cite[\S2.4]{Harper}, we derive the individual bound
\be\label{eq:bullet-one}
\cI_0(\chi_\bullet)
\ll e^{-\Theta(\sqrt{u\log u}\,)}\Eqxy,
\ee
which suffices to establish~\eqref{eq:bitcoin1}.

\begin{remark}
It is worth reiterating that our use of~\cite[Rodosski\u{\i} Bound~2]{Harper} to derive~\eqref{eq:bullet-one} relies on the fact that the character
$\chi_\bullet$ (if it exists) has order \emph{two}, which exceeds
the cardinality $B$ of $\cA$.
In other words, the set $\cB$ defined in~\cite[Section~2.5]{Harper} 
is \emph{empty}.
\end{remark}

\bigskip

\noindent{\sc Case 5:} Central integral with $y$ small
and $k$ arbitrary.

\medskip

Building on ideas of Soundararajan~\cite{Sound},
Harper~\cite[\S2.6]{Harper} proves that
\be\label{eq:promises}
\cI_0(\chi)
\ll\Eqxy\begin{cases}
2^{-y^{1/3}}
&\quad\hbox{if $(\log\log x)^3\le y\le \log x$},\\
2^{-(\log y)^4}
&\quad\hbox{if $\log x<y\le e^{(\log x)^{0.1}}$},
\end{cases}
\ee
holds for every $\chi\ne\chi_0$ when $k\ge k_0$.
Again, using~\cite[Rodosski\u{\i} Bound~2]{Harper}
in place of~\cite[Rodosski\u{\i} Bound~1]{Harper},
it is further shown that the same individual bound holds for
$\chi_\bullet$. As in Case 1 we have
$y\to\infty$ as $u\to\infty$, hence~\eqref{eq:bitcoin1} follows.

Using~\eqref{eq:sumkXik-bd} and~\eqref{eq:promises} we also get that
$$
\sum_{k\ge k_0}|\Xi_q(k)|
\max\limits_{\chi\in\Xi_q(k)}
\big|\cI_0(\chi)\big|
\ll\Eqxy\begin{cases}
q\,2^{-y^{1/3}}
&\quad\hbox{if $(\log\log x)^3\le y\le \log x$},\\
q\,2^{-(\log y)^4}
&\quad\hbox{if $\log x<y\le e^{(\log x)^{0.1}}$}.
\end{cases}
$$
Since $y\to\infty$ and $q\le y^A=o\bigl(2^{(\log y)^4}\bigr)$,
the above sum contributes $o(\Eqxy)$ to~\eqref{eq:bitcoin2}.

\bigskip

\noindent{\sc Case 6:} Full integral with $y$ very small
and $k$ arbitrary.

\medskip

Harper~\cite[\S2.6]{Harper} shows that $\cI_0(\chi)\ll\Eqxy/\log x$
holds for $1\ll y\le(\log\log x)^3$, and thus
$$
\sum_{k\ge 0}|\Xi_q(k)|
\max\limits_{\chi\in\Xi_q(k)}
\big|\cI_0(\chi)\big|
\ll \frac{q}{\log x}\,\Eqxy.
$$
Since $q\le (\log\log x)^{3A}$ in this case,
the sum here contributes an amount of size $o(\Eqxy)$
to the sum in~\eqref{eq:bitcoin2}. In view of~\eqref{eq:tails,notverysmall} we complete the
 proof of~\eqref{eq:bitcoin1} and~\eqref{eq:bitcoin2} in this situation
provided that $yq\to\infty$ as $u\to\infty$, e.g., whenever
$q\sqrt{y}>(\log\log x)^{1/3}$.

When $q\sqrt{y}\le(\log\log x)^{1/3}$, we express
$\cI(\chi)$ as a sum $\cJ_\infty(\chi)+\cJ_0(\chi)$ with
\dalign{
\cJ_\infty(\chi)&\defeq\int\limits_{|t|\ge 1}
L(\alpha+it,\chi;y)x^{\alpha+it}\breve\Phi(\alpha+it)\,dt,\\
\cJ_0(\chi)&\defeq\int\limits_{|t|<1}
L(\alpha+it,\chi;y)x^{\alpha+it}\breve\Phi(\alpha+it)\,dt,
}
and we apply the method of Harper in~\cite[\S2.7]{Harper}
with $\eps\defeq 1$ in his notation. Harper shows that
$$
\cJ_\infty(\chi)\ll\frac{\sqrt{y}}{(\log\log x)^{2/5}}\,\Eqxy,
$$
hence by~\eqref{eq:sumkXik-bd} one has
$$
\sum_{k\ge 0}|\Xi_q(k)|
\max\limits_{\chi\in\Xi_q(k)}
\big|\cJ_\infty(\chi)\big|
\ll\frac{q\sqrt{y}}{(\log\log x)^{2/5}}\,\Eqxy
\le\frac{\Eqxy}{(\log\log x)^{1/15}}.
$$
Thus, this sum contributes an amount of size $o(\Eqxy)$ to 
both~\eqref{eq:bitcoin1} and~\eqref{eq:bitcoin2}. On the other hand, 
arguing as in~\cite[\S2.7]{Harper} and applying~\cite[Rodosski\u{\i} Bound~1]{Harper}
or~\cite[Rodosski\u{\i} Bound~2]{Harper} as appropriate,
for some small constant $c\in(0,1)$ we have
$$
\biggl|\frac{L(\alpha+it,\chi;y)}{L(\alpha,\chi_0;y)}\biggr|
\ll\biggl(1+\frac{\log x}{y}\biggr)^{-0.2c\sqrt{y}}.
$$
Therefore, for $100/c^2\le y<(\log\log x)^3$, and
keeping in mind the definition~\eqref{eq:airplants},
we get that
$$
\cJ_0(\chi)
\ll\frac{(\log\log x)^{3/2}}{\log x}\,\Eqxy.
$$
Using~\eqref{eq:sumkXik-bd} again, it follows that
$$
\sum_{k>k_0}|\Xi_q(k)|
\max\limits_{\chi\in\Xi_q(k)}
\big|\cJ_0(\chi)\big|
\ll\frac{q\,(\log\log x)^{3/2}}{\log x}\,\Eqxy.
$$
This sum also contributes an amount of size $o(\Eqxy)$
to both~\eqref{eq:bitcoin1} and~\eqref{eq:bitcoin2},
and we are done.
\end{proof}


{\Large\section{Characters in the problem range}}

\subsection{Exceptional zeros}
We apply two familiar principles that are commonly used in
treatments of Linnik's Theorem. The first is the
zero-free region for Dirichlet $L$-functions
(see Gronwall~\cite{Gron}, Landau~\cite{Landau} and
Titchmarsh~\cite{Titch}).

 \begin{lemma}\label{lem:gron-land-titch}
There is an absolute constant $c_1>0$ such that,
for every $q\in\N$, the function
\be\label{eq:Lproduct}
\prod_{\chi\bmod q}L(s,\chi)
\ee
has at most one zero $\beta+i\gamma$ satisfying
$$
\beta>1-\frac{c_1}{\ell}\qquad\text{with}\quad
\ell\defeq\log q(|\gamma|+3).
$$
Such a zero, if one exists, is simple and real, and 
corresponds to a nonprincipal real character.
\end{lemma}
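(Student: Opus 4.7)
The plan is to adapt the classical de la Vall\'ee-Poussin argument, refined by Landau and Titchmarsh, which rests on two workhorses: (i) the nonnegativity identity $3 + 4\cos\theta + \cos 2\theta = 2(1+\cos\theta)^2 \ge 0$, and (ii) the partial-fraction bound derived from the Hadamard product, namely, for any character $\chi$ mod $q$ and $\sigma > 1$,
\[
-\operatorname{Re}\frac{L'}{L}(\sigma+it,\chi) \leq C_0\log(q(|t|+3)) - \sum_\rho \operatorname{Re}\frac{1}{\sigma+it-\rho},
\]
with $C_0$ absolute and the sum ranging over nontrivial zeros of $L(s,\chi)$. The crucial structural fact is that each term in the sum is nonnegative for $\sigma > 1$, so dropping all but a distinguished zero preserves the inequality.

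First I would rule out zeros $\rho = \beta + i\gamma$ of $L(s,\chi)$ with either $\gamma \neq 0$ or $\chi^2 \neq \chi_0$. Applying the positivity identity to the Dirichlet series of $-L'/L$ at the triple $(\chi_0, \chi, \chi^2)$ and arguments $(\sigma, \sigma+i\gamma, \sigma+2i\gamma)$ produces
\[
3\Bigl(-\tfrac{\zeta'}{\zeta}(\sigma)\Bigr) + 4\operatorname{Re}\Bigl(-\tfrac{L'}{L}(\sigma+i\gamma,\chi)\Bigr) + \operatorname{Re}\Bigl(-\tfrac{L'}{L}(\sigma+2i\gamma,\chi^2)\Bigr) \geq 0.
\]
Bounding $-\zeta'/\zeta(\sigma) \leq (\sigma-1)^{-1} + O(1)$, keeping only the zero $\rho$ in the middle term, and bounding the third term by $O(\log \ell)$ (legitimate precisely because $\chi^2(\sigma+2i\gamma)$ is away from a pole under our assumption on $\chi$ or $\gamma$), one arrives at $\tfrac{3}{\sigma-1} - \tfrac{4}{\sigma-\beta} + O(\log\ell) \geq 0$. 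The standard calibration $\sigma = 1 + \lambda/\log\ell$ with small absolute $\lambda$ forces $1-\beta \gg 1/\log\ell$, contradicting $\beta > 1 - c_1/\ell$ once $c_1$ is small enough. Consequently, any exceptional zero must lie on the real axis and belong to a real nonprincipal character.

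Next, to secure uniqueness, I would suppose two distinct real nonprincipal characters $\chi_1, \chi_2$ mod $q$ each have a real zero $\beta_1, \beta_2$ in the claimed region, and consider
\[
Z(s) \;:=\; \zeta(s)\, L(s,\chi_1)\, L(s,\chi_2)\, L(s,\chi_1\chi_2).
\]
Since $\chi_1,\chi_2$ are real, the Dirichlet coefficients of $-Z'/Z$ factor on prime powers as $\Lambda(p^k)(1+\chi_1(p)^k)(1+\chi_2(p)^k) \geq 0$, whence $-Z'/Z(\sigma) \ge 0$ for $\sigma > 1$. Pairing this with the partial-fraction expansion and retaining only the pole at $s=1$ and the zeros $\beta_1, \beta_2$ gives $\tfrac{1}{\sigma-\beta_1} + \tfrac{1}{\sigma-\beta_2} \leq \tfrac{1}{\sigma-1} + O(\log q)$; choosing $\sigma = 1 + c_1/\log q$ with $c_1$ small enough contradicts the assumption that $\beta_i > 1 - c_1/\log q$. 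The same mechanism, applied to $\chi_2 = \bar\chi_1$, rules out complex characters having a real zero in the region, and applied with a double zero in place of two distinct zeros, it yields simplicity.

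Finally, the principal character is excluded because $L(s,\chi_0) = \zeta(s)\prod_{p\mid q}(1-p^{-s})$ and the first step applied to $\zeta$ itself is exactly the classical zero-free region of Riemann's zeta. The main obstacle is purely one of constant-chasing: one must pick $c_1$ small enough (and independent of $q$) that the three bounds above all produce genuine contradictions simultaneously, a standard calibration carried out (for instance) in Davenport's \emph{Multiplicative Number Theory} or in Iwaniec--Kowalski's \emph{Analytic Number Theory}.
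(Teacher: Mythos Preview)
The paper does not prove this lemma at all; it merely states it and cites Gronwall, Landau, and Titchmarsh as sources, treating it as a classical input. Your sketch is precisely the standard de~la~Vall\'ee~Poussin--Landau argument that those references (and modern accounts such as Davenport or Iwaniec--Kowalski) carry out, so in that sense your proposal supplies what the paper deliberately omits.

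One minor correction: you repeatedly write $\log\ell$ where you mean $\ell$. Recall that in the lemma $\ell\defeq\log q(|\gamma|+3)$ is already a logarithm, so the partial-fraction bound on the third term is $O(\ell)$, not $O(\log\ell)$, and the calibration should read $\sigma=1+\lambda/\ell$, not $\sigma=1+\lambda/\log\ell$. With those slips repaired the inequalities balance correctly. Also, your remark that ``the same mechanism, applied to $\chi_2=\bar\chi_1$, rules out complex characters having a real zero'' is redundant: a complex character has $\chi^2\ne\chi_0$, so the $3$--$4$--$1$ step already excludes \emph{all} of its zeros (real or not) from the region, and no separate argument is needed.
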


The second principle, which is due to Linnik~\cite{Linnik},
is often referred to as the ``Deuring-Heilbronn'' phenomenon.

\begin{lemma}\label{lem:linnik}
There is an absolute constant $c_2>0$ for which
the following holds. Suppose the exceptional zero in
Lemma~\ref{lem:gron-land-titch} exists and is $($say$)$
$\beta=1-\eps/\log q$. Then
the function~\eqref{eq:Lproduct} does not vanish
in the region
$$
\sigma>1-\frac{c_2\log(\eps^{-1})}{\ell}\qquad\text{with}\quad
\ell\defeq\log q(|\gamma|+3).
$$
\end{lemma}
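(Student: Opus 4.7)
The plan is to follow the classical Linnik-style proof of the Deuring-Heilbronn phenomenon, as given (for example) in Chapter~18 of Iwaniec and Kowalski. The mechanism is that the \emph{presence} of a real zero $\beta_1 = 1-\eps/\log q$ close to the line $\sigma=1$, guaranteed by Lemma~\ref{lem:gron-land-titch}, \emph{repels} every other zero of $\prod_{\chi\bmod q}L(s,\chi)$ from that line, with the strength of the repulsion growing logarithmically in $\eps^{-1}$.

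To implement this, I would begin with a classical nonnegative character-sum identity built out of the factor $1\pm\chi_1(n)$ (with $\chi_1$ the real exceptional character), combined with the Fejér-type kernel $3+4\cos\theta+\cos 2\theta = 2(1+\cos\theta)^2\geq 0$. Multiplying the latter by $\Lambda(n)n^{-\sigma}$ and one of $1\pm\chi_1(n)$ and summing gives, for $\sigma>1$,
\[
0\le\sum_n\frac{\Lambda(n)}{n^\sigma}\bigl(1\pm\chi_1(n)\bigr)\bigl(3+4\mathrm{Re}(\chi(n)n^{-i\gamma})+\mathrm{Re}(\chi^2(n)n^{-2i\gamma})\bigr),
\]
for any character $\chi$ modulo $q$ and any real $\gamma$. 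Expanding the product and identifying the resulting Dirichlet sums with $-\zeta'/\zeta(\sigma)$ and with $-L'/L$ evaluated at the six relevant character/point pairs produces an inequality among their values at $\sigma$, $\sigma+i\gamma$, and $\sigma+2i\gamma$.

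Next I would substitute the Hadamard-product expansion of each logarithmic derivative, expressing $-L'/L(s,\psi)$ as a sum over zeros plus $O(\log q(|\mathrm{Im}(s)|+3))$. Applied to a hypothetical zero $\rho=\beta+i\gamma$ of some $L(s,\chi)$ with $\beta$ closer to $1$ than the conclusion of the lemma permits, and choosing the sign $\pm$ in the weight $1\pm\chi_1(n)$ so that the exceptional zero $\beta_1$ enters with the favorable sign, the resulting inequality forces
\[
\frac{c}{\sigma-\beta}\le f(\sigma,\beta_1)+O(\ell),
\]
where $f(\sigma,\beta_1)$ is a function that, thanks to the cancellation
\[
\frac{1}{\sigma-1}-\frac{1}{\sigma-\beta_1}=\frac{1-\beta_1}{(\sigma-1)(\sigma-\beta_1)},
\]
behaves much better than either pole contribution separately. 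Optimizing $\sigma-1$ in terms of $\eps$ and $\ell$ then extracts the bound $1-\beta\gg\log(\eps^{-1})/\ell$.

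The main obstacle is arranging the combinatorics of signs, weights, and character twists so that the exceptional zero enters the inequality with the \emph{right} sign (repelling rather than accommodating), while all other zeros and the twisted $L$-functions $L(s,\chi\chi_1)$, $L(s,\chi^2\chi_1)$ contribute only error terms of size $O(\ell)$. The latter control is secured by Lemma~\ref{lem:gron-land-titch}, which excludes additional nearby zeros of those auxiliary characters. The second technical point---extracting a \emph{logarithmic} rather than polynomial dependence on $\eps^{-1}$---is typically achieved either by iterating the inequality with progressively smaller values of $\sigma-1$ or by appealing to Turán's power-sum method, and is where the bulk of the work in standard treatments lies.
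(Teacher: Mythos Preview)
The paper does not actually prove this lemma; it merely states it as a known result, attributing it to Linnik and referencing Chapter~18 of Iwaniec and Kowalski for the general circle of ideas. Your proposal, by contrast, supplies a detailed and essentially correct outline of the classical argument. You correctly identify the key mechanism (the repulsion effect), the basic nonnegativity input (a Fej\'er-type trigonometric inequality twisted by $1\pm\chi_1$), the use of the Hadamard expansion for $L'/L$, and the crucial cancellation $\tfrac{1}{\sigma-1}-\tfrac{1}{\sigma-\beta_1}$ that makes the exceptional zero help rather than hurt. You also correctly flag that extracting the logarithmic dependence on $\eps^{-1}$ is the nontrivial step and is usually handled either by iteration or by Tur\'an's power-sum method. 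In short, your route is the standard one and is more than the paper itself provides; the authors simply take the lemma off the shelf.
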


\subsection{Bounds on character sums}

The proof of Theorem~\ref{thm:chang-style} relies heavily
on the following result
of Chang~\cite[Corollary~9]{Chang}, which bounds short
character sums over intervals for certain primitive
characters with a smooth conductor.

\begin{lemma}\label{lem:chang-char-bound}
There are absolute constants $c_1,c_2>0$
for which the following holds.
Let $\chi$ be a primitive character modulo $q$, let $T\ge 1$,
and let $\cI$ be an arbitrary interval of length $N$, where
$q>N>P(q)^{1000}$ and
$$
\log N\ge(\log qT)^{1-c_1}+c_2\log\Bigl(\frac{2\log q}{\log\fq}\Bigr)
\frac{\log\fq}{\log\log q}\qquad\text{with}
\quad \fq\defeq\prod_{p\,\mid\,q}p.
$$
Then
\be\label{eq:SCS1}
\biggl|\sum_{n\in\cI}\chi(n)n^{-it}\biggr|
\le N e^{-\sqrt{\log N}}\qquad(|t|\le T).
\ee
\end{lemma}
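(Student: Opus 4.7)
The plan is to follow Chang's method in \cite{Chang}, which exploits the smoothness of the modulus via Postnikov's formula combined with Weyl-type exponential-sum bounds and additive-combinatorial input. First, I would use the Chinese Remainder Theorem to write $q=\prod_i p_i^{a_i}$ and factor $\chi=\prod_i\chi_i$; since $\chi$ is primitive, each $\chi_i$ is primitive modulo $p_i^{a_i}$. The twist $n\mapsto n^{-it}$ can be removed by partial summation once the purely character-sum version is controlled uniformly on all subintervals of $\cI$, and the loss incurred is of order $1+T\log N$, which is comfortably absorbed by the $(\log qT)^{1-c_1}$ term in the hypothesis.

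Second, for each prime-power component $p_i^{a_i}$ with $a_i$ sufficiently large, I would invoke Postnikov's formula to write
\[
\chi_i(1+p_i^{r_i}m)=e\!\left(\frac{P_i(m)}{p_i^{a_i-r_i}}\right)
\]
on the appropriate principal subgroup, where $P_i$ is a polynomial of degree roughly $(a_i-r_i)/r_i$ whose leading coefficient is a $p_i$-adic unit (this is where primitivity of $\chi_i$ enters). Splitting $\cI$ into cosets of these subgroups modulo $q$ turns the character sum into a manageable sum of exponential sums with polynomial phases over arithmetic progressions in short intervals. These are attacked by Weyl differencing, van der Corput, or Vinogradov's mean value theorem, depending on the degree of $P_i$.

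Third, I would combine the resulting power savings across the different prime components, and handle any prime factors of $q$ with small exponent via Burgess-type estimates. This is the source of the hypothesis $N>P(q)^{1000}$, which gives the Burgess method enough room to produce nontrivial savings on each component; the numerical constant $1000$ is a convenient choice that ensures a final saving of the target shape $\exp(-\sqrt{\log N})$ can be assembled from the individual component estimates.

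The main obstacle, and the heart of Chang's argument, is the simultaneous treatment of all prime-power components: one needs a genuine power saving in each polynomial exponential sum, and this saving must survive both the loss from splitting $\cI$ into cosets and the summation over all components of $q$. The somewhat peculiar shape of the lower bound on $\log N$, featuring the term $c_2\log(2\log q/\log\fq)\cdot\log\fq/\log\log q$, is precisely the quantity that balances the gain from Vinogradov's mean value theorem against the coset-splitting loss; calibrating this balance uniformly in $|t|\le T$ is the principal technical challenge and the reason why the hypothesis takes its stated form rather than something cleaner.
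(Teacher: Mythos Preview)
The paper does not supply a proof of this lemma: it is quoted verbatim as \cite[Corollary~9]{Chang} and used as a black box. So there is no ``paper's own proof'' to compare your outline against; your sketch is really a sketch of Chang's argument, and at that level it correctly names the main ingredients (CRT, Postnikov's formula on prime-power components, Vinogradov-type bounds for the resulting polynomial exponential sums, and additive-combinatorial input).

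One step, however, is genuinely wrong as written. You propose to strip the twist $n^{-it}$ by partial summation, claiming the resulting loss ``of order $1+T\log N$'' is absorbed by the hypothesis $\log N\ge(\log qT)^{1-c_1}$. It is not: that hypothesis only yields $\log T\le(\log N)^{1/(1-c_1)}$, whereas absorbing a multiplicative factor of size $T$ into a saving of $e^{-\sqrt{\log N}}$ would require $\log T\ll\sqrt{\log N}$. Chang does not remove the twist by partial summation; she carries $n^{-it}$ through the Postnikov reduction, so that after the substitution $n\mapsto n_0(1+p^r m)$ it contributes an additional smooth (logarithmic) phase which is Weyl-differenced alongside the polynomial phase. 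This is exactly why the dependence on $T$ enters the hypothesis only through the additive term $(\log qT)^{1-c_1}$ rather than as a multiplicative loss. If you want your outline to stand on its own, that step needs to be rewritten accordingly.
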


We apply the following corollary of Lemma~\ref{lem:chang-char-bound},
which provides a weaker bound but has the advantage that it
can be applied to longer intervals.

\begin{corollary}\label{cor:chang-longer-range}
Fix $\nu,\tau>0$. There is a constant $c_3(\nu,\tau)>0$,
which depends only on $\nu$ and $\tau$, 
such that the following holds. Put
$$
\qfl\defeq
P(q)^{1000}+\exp\Bigl(\frac{c_3(\nu,\tau)\log q}{\log\log q}\Bigr)
\mand
\xi\defeq\min\{1,\tfrac{1}{3\nu}\}.
$$
For any primitive character $\chi$ modulo $q$, if
$\qfl<N< M\le 2N$ and $N\le q^\nu$, then
\be\label{eq:SCS2}
\biggr|\sum_{N<n\le M}\chi(n)n^{-\sigma-it}\biggr|
\le 4N^{1-\sigma}e^{-\xi\sqrt{\log N}}
\qquad(\tfrac12<\sigma<1,~|t|\le 3q^\tau).
\ee
\end{corollary}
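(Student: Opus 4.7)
The plan is to prove the corollary by Abel summation combined with Lemma~\ref{lem:chang-char-bound}. Setting $T(u) := \sum_{n \le u} \chi(n) n^{-it}$, Abel summation applied with $a_n = \chi(n) n^{-it}$ and $g(u) = u^{-\sigma}$ gives
\[
\sum_{N<n\le M} \chi(n) n^{-\sigma-it} = T(M) M^{-\sigma} - T(N) N^{-\sigma} + \sigma \int_N^M T(u) u^{-\sigma-1}\,du.
\]
Since $\sigma \int_N^M u^{-\sigma-1}\,du = N^{-\sigma} - M^{-\sigma}$, any uniform bound $|T(u)| \le B$ for $u \in [N, M]$ yields $|S| \le 2BN^{-\sigma}$ once the two $M^{-\sigma}$ contributions cancel. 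It therefore suffices to establish $|T(u)| \le 2N e^{-\xi\sqrt{\log N}}$ for every $u \in [N, M]$.

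To obtain this bound, first I would check that the choice of $c_3(\nu,\tau)$ absorbs the constants appearing in Chang's hypotheses: for $c_3(\nu,\tau)$ sufficiently large (depending on the absolute constants $c_1,c_2$ of Lemma~\ref{lem:chang-char-bound} and on $\tau$ via $T=3q^\tau$), the condition $u \ge N > \qfl$ guarantees both $u > P(q)^{1000}$ and that $\log u \ge \log\qfl \ge c_3\log q/\log\log q$ exceeds the Chang threshold $(\log qT)^{1-c_1} + c_2\log(2\log q/\log\fq)\log\fq/\log\log q$. In the easier case $2N < q$, Lemma~\ref{lem:chang-char-bound} applies directly to $[1,u]$ of length $u<q$, yielding $|T(u)| \le u\,e^{-\sqrt{\log u}} \le 2Ne^{-\sqrt{\log N}} \le 2Ne^{-\xi\sqrt{\log N}}$ by monotonicity of $v \mapsto v\,e^{-\sqrt{\log v}}$ and the bound $\xi \le 1$.

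In the harder case $2N \ge q$, I would partition $[1,u]$ into $\lceil u/L\rceil$ consecutive blocks of length $L$, where $L$ is chosen to satisfy $\qfl \le L < q$ and $\sqrt{\log L} \ge \xi\sqrt{\log N} + \log 2$. Applying Lemma~\ref{lem:chang-char-bound} to each block, and using the trivial bound on any residual partial block of length below $\qfl$, gives $|T(u)| \le (u+L)\,e^{-\sqrt{\log L}} \le 2Ne^{-\xi\sqrt{\log N}}$. The existence of such $L$ is where the value $\xi = \min\{1,1/(3\nu)\}$ becomes essential: the constraint $\log L \ge \xi^2\log N + O(\sqrt{\log N})$, combined with $\log N \le \nu\log q$, forces $\log L \le \log q/(9\nu) \le \log q/3$, so one may take, for example, $L \asymp q^{1/(9\nu)}$ in the main regime, adjusted upward to $L = \lceil P(q)^{1000}\rceil + 1$ when $P(q)$ lies near the upper end of the smoothness range allowed by $P(q)^{1000} < \qfl$.

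I expect the main obstacle to lie in the harder case: the block length $L$ must simultaneously satisfy $L > P(q)^{1000}$, $L < q$, $L \ge \qfl$, and $\sqrt{\log L} \ge \xi\sqrt{\log N} + \log 2$. The factor $1/(3\nu)$ in $\xi$ is precisely what renders the last of these constraints compatible with $L \le q^{1/3}$, comfortably below~$q$, while the constant $c_3(\nu,\tau)$ is chosen large enough to absorb both the Chang threshold on $\log L$ and the additive $\log 2$ slack in the exponent. Careful tracking of constants then yields the stated bound $|S| \le 4N^{1-\sigma}e^{-\xi\sqrt{\log N}}$.
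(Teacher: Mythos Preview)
Your approach is correct and shares the paper's key ingredients: Abel (partial) summation to strip off the factor $n^{-\sigma}$, then Chang's Lemma~\ref{lem:chang-char-bound} applied to intervals, with a block decomposition when the interval is too long for the lemma's constraint $|\cI|<q$. The two proofs diverge only in how the splitting is organised.

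The paper reduces (via partial summation) to bounding $\bigl|\sum_{n\in\cI}\chi(n)n^{-it}\bigr|$ for an \emph{arbitrary} interval $\cI$ of length $N\in(\qfl,q^\nu]$, rather than the partial sum $T(u)=\sum_{n\le u}$. In the long-interval case $N\ge\tfrac12q$ it takes the very natural block length $L=N/k$ with $k=\lfloor 2N/q\rfloor$, so that $L\in[\tfrac12 q,q)$ and $kL=N$ \emph{exactly}. Chang's lemma on each block then gives the clean bound $N\,e^{-\sqrt{\log L}}$, and the single line
\[
\log L\ \ge\ \log(\tfrac12 q)\ \ge\ \tfrac{1}{3\nu}\log q^\nu\ \ge\ \tfrac{1}{3\nu}\log N
\]
immediately yields $e^{-\sqrt{\log L}}\le e^{-\xi\sqrt{\log N}}$. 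This is also where the value $\xi=\min\{1,\tfrac{1}{3\nu}\}$ is \emph{forced}: it is simply the lower bound for $\log L/\log N$ when $L\approx q/2$ and $N\le q^\nu$.

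Your route---choosing $L$ so that $\sqrt{\log L}\ge\xi\sqrt{\log N}+\log 2$, then arguing that such an $L$ can be found near $q^{1/(9\nu)}$, with a separate adjustment when $P(q)^{1000}$ is large---works, but is more laboured than necessary. Because $\lceil u/L\rceil L$ overshoots $u$, you incur the extra $+\log 2$ in the exponent and a further case analysis to keep $L\ge\qfl$; the paper avoids all of this by taking $L$ as large as Chang permits (just below $q$) and arranging $kL=N$ exactly. If you replace your block length by $L\approx q/2$ you will find the argument collapses to two lines.
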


\begin{proof}
By partial summation, it suffices to show that
if $\cI$ is an arbitrary interval whose
length $N$ lies in $(\qfl,q^\nu]$, then
\be\label{eq:SCS3}
\biggl|\sum_{n\in\cI}\chi(n)n^{-it}\biggr|
\le N e^{-\xi\sqrt{\log N}}\qquad(|t|\le 3q^\tau).
\ee
We apply Lemma~\ref{lem:chang-char-bound} with $T\defeq 3q^\tau$.
With this choice, the inequality
$$
(\log qT)^{1-c_1}+c_2\log\Bigl(\frac{2\log q}{\log \fq}\Bigr)
\frac{\log \fq}{\log\log q}\le\frac{c_3(\nu,\tau)\log q}{\log\log q}
$$
clearly holds if $c_3(\nu,\tau)$ is large enough. 
In the case that $\qfl<N<\tfrac12q$
we obtain~\eqref{eq:SCS1}, which clearly implies~\eqref{eq:SCS3}.
When $\tfrac12q\le N\le q^\nu$, we put
$k\defeq\fl{2N/q}$, $L\defeq N/k$, and
split $\cI$ into a sum of $k$ disjoint subintervals
of length $L$. Since $q>L\ge\frac12q$ we can use~\eqref{eq:SCS1} to bound the sum over each subinterval; this gives
$$
\biggl|\sum_{n\in\cI}\chi(n)n^{-it}\biggr|
\le kLe^{-\sqrt{\log L}}.
$$
Since $kL=N$ and
$$
\log L\ge\log(\tfrac12q)\ge\tfrac{1}{3\nu}\log q^\nu
\ge\tfrac{1}{3\nu}\log N,
$$
we obtain~\eqref{eq:SCS3} in this case as well.
\end{proof}

\subsection{$L$-function bounds and zero-free regions}

\begin{corollary}\label{cor:primLfuncbound}
Fix $\tau\ge 1$. There is a constant $c_4(\tau)>0$,
which depends only on $\tau$, such that the following holds.
Put $\nu\defeq 8\tau$, and let $c_3(\nu,\tau)>0$ have the
property described in Corollary~\ref{cor:chang-longer-range}.
For every primitive character $\chi$ of modulus $q>c_4(\tau)$ we have
$$
\big|L(s,\chi)\big|\le \eta^{-1}\qfl^\eta
\qquad(\sigma>1-\eta,~|t|\le 3q^\tau),
$$
where 
$$\eta\defeq\ell^{-1/2}(\log 2\ell)^{-3/4}, \quad 
\text{with} \ \ell\defeq\log q(|t|+3),
$$ 
and
$$
\qfl\defeq
P(q)^{1000}+\exp\Bigl(\frac{c_3(\nu,\tau)\log q}{\log\log q}\Bigr).
$$
\end{corollary}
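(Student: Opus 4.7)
I plan to bound $|L(s,\chi)|$ by splitting its Dirichlet series at the thresholds $\qfl$ and $q^\nu$ and treating the three pieces by trivial estimation, Corollary~\ref{cor:chang-longer-range}, and Abel summation combined with P\'olya--Vinogradov, respectively. Throughout I assume $q > c_4(\tau)$ is large enough that $\eta < 1/2$, so in particular $\sigma > 1/2$. Since $\chi$ is primitive and non-principal, $L(s,\chi)$ is entire, and for $\sigma > 0$ one can write
\begin{equation*}
L(s,\chi) = S_1 + S_2 + T
\end{equation*}
with $S_1 = \sum_{n \le \qfl} \chi(n)/n^s$, $S_2 = \sum_{\qfl < n \le q^\nu} \chi(n)/n^s$, and $T = \sum_{n > q^\nu} \chi(n)/n^s$ (interpreted via Abel summation, which converges for $\sigma > 0$ by P\'olya--Vinogradov).

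The main term is $S_1$, which already attains the target bound. Since $n^{-\sigma} \le n^{\eta-1}$ for $n \ge 1$ and $\sigma \ge 1-\eta$, and $u \mapsto u^{\eta-1}$ is decreasing on $(0,\infty)$, integral comparison gives
\begin{equation*}
|S_1| \le \sum_{n=1}^{\qfl} n^{\eta-1} \le \int_0^{\qfl} u^{\eta-1}\,du = \eta^{-1}\qfl^\eta,
\end{equation*}
so it remains to show $|S_2| + |T| = o(1)$, which is absorbed into the stated upper bound because $\eta^{-1}\qfl^\eta \to \infty$ as $q \to \infty$.

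For $S_2$ I decompose $(\qfl, q^\nu]$ dyadically into $O(\log q)$ blocks $(N_j, 2N_j]$ with $N_j = 2^j \qfl$. Corollary~\ref{cor:chang-longer-range} applies on each block (since $N_j \le q^\nu$, $1-\eta < \sigma < 1$, $|t| \le 3q^\tau$), giving a contribution $\le 4 N_j^{1-\sigma} e^{-\xi\sqrt{\log N_j}} \le 4 q^{\nu\eta} e^{-\xi\sqrt{\log\qfl}}$, whence $|S_2| \ll (\log q)\, q^{\nu\eta} e^{-\xi\sqrt{\log\qfl}}$. The definition of $\qfl$ guarantees $\log\qfl \ge c_3(\nu,\tau)\log q/\log\log q$, while $\nu\eta\log q = O(\sqrt{\log q}/(\log\log q)^{3/4})$ from the explicit form of $\eta$; thus the exponential dominates and $|S_2| = o(1)$. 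For $T$, two iterated Abel summations combined with P\'olya--Vinogradov $|\sum_{n \le u}\chi(n)| \ll \sqrt{q}\log q$ give
\begin{equation*}
|T| \ll |s|\sqrt{q}(\log q)^2 \cdot \log(q^\nu)/(\sigma q^{\nu\sigma}) \ll q^{1/2+\tau-4\tau}(\log q)^3 = o(1)
\end{equation*}
for $\tau \ge 1$ and $\nu = 8\tau$, using $q^{\nu\sigma} \ge q^{4\tau}$ and $|s| \ll q^\tau$.

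Combining these estimates gives $|L(s,\chi)| \le \eta^{-1}\qfl^\eta + o(1)$; by taking $c_4(\tau)$ sufficiently large the error is negligible against the growing main term. The main obstacle is the exponential-versus-power comparison in bounding $S_2$: one must verify that the Chang-type saving $e^{-\xi\sqrt{\log\qfl}}$, whose strength is dictated by the choice of $c_3(\nu,\tau)$ in the very definition of $\qfl$, genuinely dominates the dyadic inflation $q^{\nu\eta}$ arising from the worst-case bound $N^{1-\sigma} \le q^{\nu\eta}$ on the largest block. This comparison is precisely what governs how large $c_4(\tau)$ must be chosen.
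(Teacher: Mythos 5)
Your proof follows the same route as the paper's: split the Dirichlet series at $\qfl$ and $q^{\nu}$, bound the initial segment trivially (the paper quotes Iwaniec's Lemma~8 for this), treat $(\qfl,q^{\nu}]$ by dyadic blocks via Corollary~\ref{cor:chang-longer-range}, and kill the tail $n>q^{\nu}$ by partial summation with P\'olya--Vinogradov. Your estimates for $S_2$ and $T$ are sound; in particular the comparison of the decay $e^{-\xi\sqrt{\log \qfl}}$, with $\log\qfl\gg\log q/\log\log q$, against the inflation $q^{\nu\eta}=\exp\bigl(O(\sqrt{\log q}/(\log\log q)^{3/4})\bigr)$ is exactly the crux, and the exponents differ by a factor $(\log\log q)^{1/4}$ in the favourable direction, with enough surplus to beat the $O(\log q)$ block count. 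The one genuine slip is in the final assembly: your estimate $|S_1|\le\int_0^{\qfl}u^{\eta-1}\,du=\eta^{-1}\qfl^{\eta}$ already exhausts the entire stated bound, so adding $|S_2|+|T|=o(1)$ yields only $\eta^{-1}\qfl^{\eta}+o(1)$, and the observation that $\eta^{-1}\qfl^{\eta}\to\infty$ does not permit you to absorb a positive $o(1)$ into an upper bound of that exact form. The repair is to use the sharper comparison $\sum_{n\le\qfl}n^{\eta-1}\le 1+\eta^{-1}(\qfl^{\eta}-1)=\eta^{-1}\qfl^{\eta}-(\eta^{-1}-1)$, which is precisely the form the paper imports from Iwaniec; since $\eta^{-1}-1\asymp(\log q)^{1/2}(\log\log q)^{3/4}\to\infty$, this slack comfortably absorbs the contributions of $S_2$ and $T$ once $c_4(\tau)$ is large. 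With that correction your argument is complete and matches the paper's.
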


\begin{proof}
Since $|t|\le 3q^\tau$ and $\tau\ge 1$, it is easy to check that
$$
2\ell=2\log q(|t|+3)\le 8\tau=\nu,
$$
and thus $q^\nu\ge e^{2\ell}$ holds throughout the region
$\{\sigma>1-\eta,~|t|\le 3q^\tau\}$.
As in the proof of~\cite[Lemma~8]{Iwan} we get that
$$
\biggl|\sum_{n\le \qfl}\chi(n)n^{-s}\biggr|
<\eta^{-1}(\qfl^{\eta}-1)+1\mand
\biggl|\sum_{n>Z}\chi(n)n^{-s}\biggr|
<1\qquad(Z\ge q^\nu).
$$
If $q^\nu\le \qfl$ then we are done.
Otherwise, we split the interval $(\qfl,q^\nu]$
into dyadic subintervals 
and apply Corollary~\ref{cor:chang-longer-range}
to bound the sum over each subinterval.
For $\qfl<N<M\le 2N$ and $N\le q^\nu$,
we have by~\eqref{eq:SCS2}:
$$
\biggl|\sum_{N<n\le M}\chi(n)n^{-s}\biggr|
\le b(N)\defeq 4N^\eta e^{-\xi\sqrt{\log N}}.
$$
By calculus, $b(N)$ is decreasing for
$$
N<\Omega\defeq \exp(2\eta^{-2} \xi^2) = \exp\bigl(\tfrac14\xi^2\ell(\log 2\ell)^{3/2}\bigr),
$$
hence at all points $N\in(\qfl,q^\nu]$ provided that $c_4(\tau)$
is large enough. For any such $N$ we have
$$
\log b(N)\le\log b(\qfl)=\log 4+\eta\log\qfl-\xi\sqrt{\log\qfl}
<\log 4-\tfrac12\xi\sqrt{\log\qfl},
$$
where the last inequality follows from $\qfl<\Omega$, and
we conclude that
$$
b(N)\le\frac{1}{10\nu\log q}\qquad\bigl(N\in(\qfl,q^\nu]\bigr)
$$
holds provided that
$$
\log(40\nu\log q)\le\tfrac12\xi\sqrt{\log\qfl}.
$$
Since  by definition
$$
\log\qfl>\frac{c_3(\nu,\tau)\log q}{\log\log q},
$$ 
the latter condition
is verified if $c_4(\tau)$ is large enough. Finally, summing the  
contributions from all subintervals, we find that
$$
\biggl|\sum_{\qfl<n\le q^\nu}\chi(n)n^{-s}\biggr|
\le\frac{\nu\log q}{\log 2}\cdot\frac{1}{10\nu\log q}<1,
$$
and the result follows.
\end{proof}

\begin{lemma}
\label{lem:Iwaniec1}
Let $\eta\in(0,\tfrac13)$, $M\ge e$, and put
$$
\Theta\defeq\eta^{-1}\log M,\qquad
\vartheta\defeq\frac{1}{400\Theta}.
$$
Let $q\ge 3$, and suppose that
\be\label{eq:eta condition}
8\log(5\log 3q)+24\eta^{-1}\log(160\Theta)
\le\tfrac83\Theta.
\ee
There is at most one nonprincipal character $\chi$ modulo $q$ 
such that simultaneously
\begin{itemize}
\item[$(i)$] $|L(s,\chi)|\le M$ holds for all $s=\sigma+it$ with
$\sigma>1-\eta$ and $|t|\le 3T$;
\item[$(ii)$] $L(s,\chi)$ has a zero $\beta+i\gamma$ with
$\beta>1-\vartheta$ and $|\gamma|\le T$.
\end{itemize} 
Such a zero, if it exists, is unique, simple and real.
\end{lemma}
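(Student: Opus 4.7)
The approach combines Jensen's formula with a direct Euler-product lower bound on $|L(s_0,\chi)|$ at a point $s_0$ slightly to the right of the line $\sigma=1$, in the spirit of Landau's treatment of the exceptional zero. Suppose for contradiction that two distinct nonprincipal characters $\chi_1\ne\chi_2$ modulo $q$ both satisfy (i) and (ii), with exceptional zeros $\rho_j=\beta_j+i\gamma_j$. The aim is to apply Jensen's inequality on a disk where hypothesis (i) furnishes the boundary bound $|L(s,\chi_j)|\le M$ and where the two exceptional zeros force an unacceptably large contribution to the zero-counting sum.

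Concretely, I would first fix a center $s_0=1+\tfrac{\eta}{3}+it_0$ with $t_0\in[-T,T]$ close to both $\gamma_1,\gamma_2$, and form the disk $D=\{|s-s_0|\le R\}$ of radius $R=\tfrac{2\eta}{3}$; this disk lies in the strip $\{\sigma>1-\eta,\ |t|\le 3T\}$ where (i) applies. At the center, a direct Euler-product estimate yields the moderate lower bound $|L(s_0,\chi_j)|\gg(\log(5\log 3q))^{-1}$, and the factor $\log(5\log 3q)$ appearing in~\eqref{eq:eta condition} tracks exactly this. Applying Jensen's formula to the product $F(s)=L(s,\chi_1)L(s,\chi_2)$, one obtains
\[
\sum_{\rho'\colon|s_0-\rho'|<R}\log\frac{R}{|s_0-\rho'|}\le\log\frac{\sup_{|s-s_0|=R}|F(s)|}{|F(s_0)|}\le 2\log M+O(\log\log 3q).
\]
Both exceptional zeros sit at distance at most $\tfrac{\eta}{3}+\vartheta+|\gamma_j-t_0|$ from $s_0$, and the calibration $\vartheta=1/(400\Theta)$ (with $\Theta=\eta^{-1}\log M$) is engineered so that each such zero contributes at least $\log(R\cdot 200\Theta)\gg\log\log M$ on the left-hand side. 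The inequality~\eqref{eq:eta condition} is precisely what is needed to force these two contributions to exceed the bound $2\log M+O(\log\log 3q)$, yielding the required contradiction.

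Once the uniqueness of the exceptional character is in hand, the reality, simplicity, and uniqueness of the zero itself follow by the same Jensen mechanism applied to $L(s,\chi)$ alone: the identity $\overline{L(s,\chi)}=L(\overline{s},\overline{\chi})$ shows that a complex $\chi$ would produce a second exceptional character $\overline{\chi}$, contradicting what has just been proved; likewise a double zero or a conjugate pair $\rho,\overline{\rho}$ with $\gamma\ne 0$ would place two zeros of $L(s,\chi)$ in the small subdisk, again incompatible with~\eqref{eq:eta condition}.

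\textbf{Main obstacle.} The main difficulty is the case $|\gamma_1-\gamma_2|>\eta$: since $T$ may be much larger than $\eta$, a single disk centered at a fixed height $t_0$ cannot enclose both zeros in its small subdisk. The remedy is a tiling argument --- cover $[-T,T]$ by intervals of length $\asymp\eta$ and run the Jensen bound in each tile --- but keeping the constants tight enough to close~\eqref{eq:eta condition} demands careful bookkeeping, in particular balancing the lower-bound factor $\log(5\log 3q)$ against the $24\eta^{-1}\log(160\Theta)$ term. This delicate balance is what dictates the somewhat intricate form of~\eqref{eq:eta condition}.
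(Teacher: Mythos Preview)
Your approach differs substantially from the paper's, which does not reprove the zero-free statement from scratch: it simply rewrites~\eqref{eq:eta condition} in the equivalent form
\[
8\log(5\log 3q)+\frac{24}{\eta}\log(2M/5\vartheta)\le\frac{1}{15\vartheta},
\]
checks that this single inequality dominates the three numerical hypotheses occurring in Iwaniec's Lemmas~11 and~12~\cite{Iwan}, and then quotes those lemmas (the first excludes zeros with $|t|>\eta/4$, the second leaves at most one simple real zero in $|t|\le\eta/4$, and the third rules out two real characters having such a zero).

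More seriously, your Jensen computation contains a genuine gap. With center $s_0=1+\eta/3+it_0$ and radius $R=2\eta/3$, every zero $\rho_j$ satisfies $|s_0-\rho_j|\ge\sigma_0-\beta_j>\eta/3$, so $R/|s_0-\rho_j|<2$ and each zero contributes at most $\log 2$---not $\log(R\cdot 200\Theta)$---to the left side of Jensen's inequality; two such contributions cannot compete with $2\log M$ on the right. Moving the center to $\sigma_0=1+O(\vartheta)$ does raise each zero's contribution to $\asymp\log\Theta$, but the Euler-product lower bound then degrades to $|L(s_0,\chi)|\gg\vartheta$, adding a matching $\log\Theta$ per factor to the right side, and one is still comparing $\log\log M$ against $\log M$. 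The calibration $\vartheta=1/(400\Theta)$ is designed for a different comparison: in Iwaniec's argument one works with $-\mathrm{Re}\,(L'/L)$ and the partial-fraction sum over zeros, so that an exceptional zero contributes $\mathrm{Re}\,(s_0-\rho)^{-1}\asymp 1/\vartheta$ against an error of size $\eta^{-1}\log M=\Theta$; positivity (via $3+4\cos\theta+\cos2\theta\ge 0$, or via the nonnegative-coefficient product $\zeta\cdot L(\chi_1)\cdot L(\chi_2)\cdot L(\chi_1\chi_2)$) is what makes the bookkeeping close. Your tiling proposal for separated heights inherits the same defect, since a single zero in a tile never produces a contradiction via Jensen alone. (Incidentally, the factor $\log(5\log 3q)$ in~\eqref{eq:eta condition} does not arise from a lower bound on $|L(s_0,\chi)|$; it tracks the bound $|\zeta(s)|\ll\log q(|t|+3)$ for the $\zeta$-factor in Iwaniec's product.)
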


\begin{proof}
The inequality~\eqref{eq:eta condition} is equivalent to
\be\label{eq:eta-condition2}
8\log(5\log 3q)+\frac{24}{\eta}\log(2M/5\vartheta)
\le\frac{1}{15\vartheta}.
\ee
The first part of the proof of~\cite[Lemma~11]{Iwan}
shows that $L(s,\chi)\ne 0$ throughout the region
$$
\Gamma\defeq\begin{cases}\{ \sigma+it:
\sigma>1-\vartheta,~|t|\le T\}&\quad\hbox{if $\chi^2\ne\chi_0$},\\
\{\sigma+it: \sigma>1-\vartheta,~\eta/4<|t|\le T\}&\quad\hbox{if $\chi^2=\chi_0$},\\
\end{cases}
$$ 
provided that
$$
6\log(5\log 3q)+\frac{16}{\eta}\log(M/5\vartheta)
+\frac{8}{\eta}\log(2M/5\vartheta)
\le\frac{1}{15\vartheta}.
$$  
The second part of the proof of~\cite[Lemma~11]{Iwan}
shows that $L(s,\chi)$ has at most one zero
in the region
$$
\Delta\defeq\{\sigma+it:\sigma>1-\vartheta,~|t|\le\eta/4\},
$$
and any such zero is simple and real provided that
$$
8\log(5\log 3q)+\frac{16}{\eta}\log(M/5\vartheta)
\le\frac{1}{15\vartheta}.
$$
Finally,~\cite[Lemma~12]{Iwan} asserts that there is at most
one character $\chi\ne\chi_0$ such that $L(s,\chi)$ has a real zero
$\beta>1-\vartheta$ provided that
$$
2\log(5\log 3q)+\frac{12}{\eta}\log(M/5\vartheta)
\le\frac{2}{15\vartheta}.
$$
In view of~\eqref{eq:eta-condition2} the above three
inequalities hold, and since for any  $\chi\ne\chi_0$ we have
$$
\Gamma \cup \Delta =\{ \sigma+it:
\sigma>1-\vartheta,~|t|\le T\},
$$
the result follows.
\end{proof}

Finally, we use the following statement,
which is an immediate consequence
of a result of Iwaniec~\cite[Theorem~2]{Iwan}.

\begin{lemma}\label{lem:gulp}
For any $q\ge 3$, there is no primitive character
$\chi$ modulo $q$ for which $L(s,\chi)$ has a zero
$\beta+i\gamma$ satisfying
$$
\beta>1-\frac{1}{40000(\log q+(\ell\log 2\ell)^{3/4})}
\mand \gamma\ne 0,
$$
where $\ell\defeq\log q(|\gamma|+3)$.
\end{lemma}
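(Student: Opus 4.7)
The plan is to obtain Lemma~\ref{lem:gulp} as a direct consequence of~\cite[Theorem~2]{Iwan}, with essentially no new analytic work required. The statement of Iwaniec's theorem provides, for each primitive character $\chi$ modulo $q\ge 3$, a zero-free region for $L(s,\chi)$ of the shape
$$
\beta\le 1-\frac{c_0}{\log q+(\ell\log 2\ell)^{3/4}}
$$
for every nontrivial zero $\beta+i\gamma$ with $\gamma\ne 0$, where $\ell=\log q(|\gamma|+3)$ and $c_0>0$ is an absolute constant implicit in Iwaniec's argument. Thus the whole content of the lemma is to check that $c_0\ge 1/40000$ suffices.

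First I would state~\cite[Theorem~2]{Iwan} verbatim and line up its notation with ours (the parameter $\ell$, the choice of ``log-log'' correction $(\ell\log 2\ell)^{3/4}$, and the convention that only complex zeros are excluded). Second, I would track through the numerical constants that appear in Iwaniec's proof, which is where the constant $c_0$ is determined. Since the lemma we are stating uses $1/40000$, which is intentionally a very crude explicit constant, this amounts to verifying an inequality of the form $c_0\ge 1/40000$, and this is essentially a one-line check once Iwaniec's constants are fixed.

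Once this bookkeeping is done, the contrapositive of~\cite[Theorem~2]{Iwan} is exactly the assertion of the lemma: no primitive $\chi\bmod q$ can admit a zero $\beta+i\gamma$ with $\gamma\ne 0$ in the larger region
$$
\beta>1-\frac{1}{40000\bigl(\log q+(\ell\log 2\ell)^{3/4}\bigr)}.
$$

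The main (and only) obstacle is therefore not analytic but notational: it is the transcription and constant-chasing from Iwaniec's paper to the normalization used here. Since Iwaniec's argument itself is taken as a black box, there is no conceptual step that needs to be reproved.
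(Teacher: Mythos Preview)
Your proposal is correct and matches the paper's approach exactly: the paper does not give a separate proof of Lemma~\ref{lem:gulp} but simply states that it is an immediate consequence of~\cite[Theorem~2]{Iwan}. Your identification of the only work as notational bookkeeping and a crude constant check is precisely the intended reading.
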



{\Large\section{Proofs of the main results}}

\subsection{Proof of Theorem~\ref{thm:chang-style}}

Let $\cQ$ be the set of numbers that satisfy the
conditions of Theorem~\ref{thm:chang-style} with
some large $Q_A>0$. Let $q\in\cQ$ with $q>Q_A$,
and observe that the condition~\eqref{eq:ch-restrict}
of Theorem~\ref{thm:chang-style} implies the
condition~\eqref{eq:SQA-ass} of Hypothesis~$\soundconj{\cQ,A}$.
Thus, by Proposition~\ref{prop:SQA-criterion},
to prove the desired result
it suffices to establish \eqref{eq:k0defined};
that is, we need to show that
$$
\cA\defeq
\Bigl\{\chi\ne\chi_0:L(s,\chi)=0\text{~has a zero in~}
\{\sigma>1-k_0/\log q,~|t|\le q\}\Bigr\}.
$$
has cardinality at most one.

First, we claim that there is a sufficiently large
constant $\tau_A>0$ (depending only on $A$) with the following
property. For every character $\chi$ modulo $q$, let
$\check q$ be the conductor of $\chi$, and let $\check\chi$ be
the character modulo $\check q$ that induces $\chi$.
Put
$$
\cR_\chi\defeq
\bigl\{\sigma+it:\sigma>1-k_0/\log q,
~|t|\le \min\{q,\check q^{\tau_A}\}\bigr\}.
$$
Then
\be\label{eq:A-new-description}
\cA=\{\chi\ne\chi_0:L(s,\chi)=0\text{~has a zero in~}\cR_\chi\}.
\ee
To prove the claim, suppose on the contrary that there is a
character $\chi\ne\chi_0$ such that $L(s,\chi)$ has
a zero $\beta+i\gamma$ satisfying
\be\label{eq:darksideofmoon}
\beta>1-\frac{k_0}{\log q},\qquad\min\{q,\check q^{\tau_A}\}<|\gamma|\le q.
\ee
Clearly, this is not possible unless $\check q^{\tau_A}<q$,
which we assume. Put $\ell\defeq q(|\gamma|+3)$
and $\check\ell\defeq\check q(|\gamma|+3)$, and note that
$$
40000(\log \check q+(\check\ell\log 2\check\ell)^{3/4})
\le 40000(\tau_A^{-1}\log q+(\ell\log 2\ell)^{3/4})
\le k_0^{-1}\log q
$$
if $\tau_A$ is large enough, since $\ell\ll\log q$
and $q>\check q^{\tau_A}>2^{\tau_A}$ (we remind the reader
that, in the definition~\eqref{eq:k0defined} of $k_0$, the constant 
$D$ is absolute). Therefore,~\eqref{eq:darksideofmoon} implies
$$
\beta>1-\frac{1}{40000(\log \check q
+(\check\ell\log 2\check\ell)^{3/4})}
\mand \gamma\ne 0.
$$
As this contradicts Lemma~\ref{lem:gulp} (with
$\check\chi,\check\ell,\check q$ replacing $\chi,\ell,q$
respectively),
we conclude that~\eqref{eq:A-new-description} is a correct
description of the set $\cA$.

Next, we eliminate from $\cA$ certain 
characters with a bounded conductor.
Let $c_4(\tau_A)>0$ be the constant described in
Corollary~\ref{cor:primLfuncbound} for $\tau\defeq\tau_A$. 
Let $\chi\in\cA$, and suppose that
$\check q\le c_4(\tau_A)$. Since $L(s,\chi)$ has a zero
$\beta+i\gamma\in\cR_\chi$, using the inequalities
$$
\log q \ge \log Q_A
$$
and
$$
\log\check q(|\gamma|+3)
\le\log\check q(\check q^{\tau_A}+3)
\le\log(6\check q^{\tau_A+1})
< (\tau_A+3)\log\check q
\le (\tau_A+3)\log c_4(\tau_A),
$$ 
we derive that
\dalign{
\beta &>1-\frac{k_0}{\log q}\ge 1-\frac{k_0}{\log Q_A}\\
&\ge 1-\frac{k_0}{\log Q_A}\frac{(\tau_A+3)\log c_4(\tau_A)}
{\log\check q(|\gamma|+3)}
>1-\frac{k_0(\tau_A+3)\log c_4(\tau_A)/\log Q_A}{\log\check q(|\gamma|+3)}.
} 

Since $Q_A$ can be chosen \emph{after} both $\tau_A$ and
$ c_4(\tau_A)$ 
are defined, and $\beta+i\gamma$ is  a zero of $L(s,\check\chi)$,
taking $Q_A$ large enough and applying
Lemma~\ref{lem:gron-land-titch} we deduce that $\gamma=0$.
Consequently, the real zero $\beta$ of $L(s,\chi)$ satisfies
$$
\beta>1-\frac{k_0(\tau_A+3)\log c_4(\tau_A)/\log Q_A}{\log 9}.
$$
However, this situation is untenable if $Q_A$ is sufficiently large,
for there are only finitely many characters $\chi$
modulo $q$ with a conductor $\check q\le c_4(\tau_A)$, and the
\text{$L$-function}
attached to any one of these characters has at most finitely many
zeros in the real interval 
$[0, 1]$; such zeros
must lie in $(0,1)$, hence they are bounded away from one
by a constant that depends only on $\tau$. 
In summary, if $Q_A$ is large enough, then every
$\chi\in\cA$ has $\check q> c_4(\tau_A)$,
and so~\eqref{eq:A-new-description} transforms to
\be\label{eq:A-new-description2}
\cA=\{\chi\ne\chi_0:\check q> c_4(\tau_A)\text{~and~}
L(s,\chi)=0\text{~has a zero in~}\cR_\chi\}.
\ee

It remains to show that $\cA$ defined by~\eqref{eq:A-new-description2} has cardinality at most one: 
\be\label{eq:A le 1}
|\cA| \le 1.
\ee
To this end, let $\chi\in\cA$. Applying
Corollary~\ref{cor:primLfuncbound} with $\tau\defeq\tau_A$ we have
$$
\big|L(s,\check\chi)\big|
\le \check\eta^{-1}\cqfl^{\check\eta}
\qquad(\sigma>1-\check\eta,~|t|\le 3\check q^{\tau_A}),
$$
where 
$$\check\eta\defeq\check\ell^{-1/2}(\log 2\check\ell)^{-3/4} \qquad 
\text{with}  \quad \check\ell\defeq\log \check q(|t|+3),
$$
and
\be\label{eq:funny-definition}
\cqfl\defeq
P(\check q)^{1000}+\exp\Bigl(\frac{c_3(\nu,\tau)\log \check q}
{\log\log \check q}\Bigr).
\ee
We apply Lemma~\ref{lem:Iwaniec1} with 
$$
M\defeq\check\eta^{-1}\cqfl^{\check\eta},\qquad 
\Theta\defeq\check\eta^{-1}\log M, \qquad T\defeq\check q^{\tau_A}.
$$
To establish (the analogue of) the bound~\eqref{eq:eta condition},
it suffices to show
\be\label{eq:cat-lily}
\log(5\log 3\check q)\le\tfrac16\Theta
\mand
3\check\eta^{-1}\log(160\Theta)\le\tfrac16\Theta,
\ee 
where
$$
\Theta\defeq\check\eta^{-1}\log M
=\log\cqfl-\check\eta^{-1}\log\check\eta.
$$
In $\cR_\chi$ we have $|t|\le 3\check q^{\tau_A}$, so that
\be\label{eq:supercat}
\check\ell\asymp\log\check q\mand
\check\eta\asymp(\log\check q)^{-1/2}(\log\log\check q)^{-3/4},
\ee
where the implied constants can be made explicit and
depend only on $A$. In view of~\eqref{eq:funny-definition} we deduce that
$$
\Theta
=\log\cqfl+O((\log\check q)^{1/2}(\log\log\check q)^{7/4})
=\log\cqfl+O((\log\cqfl)^{2/3}).
$$
Increasing the value of $ c_4(\tau_A)$ if necessary,
the first inequality in~\eqref{eq:cat-lily} follows from
$$
\log(5\log 3\check q)\le\tfrac1{10}\log\cqfl,
$$
which is clear if $ c_4(\tau_A)$ is large enough in view
of~\eqref{eq:funny-definition}. 
Indeed, it suffices that $c_3(\nu,\tau)\ge 1$ and that
$ c_4(\tau_A)$ exceeds a certain absolute constant, and this
can all be arranged before the value of $Q_A$ is chosen.
Next, taking into account
the second estimate of~\eqref{eq:supercat}, we see that there is
a constant $C_A>0$ (depending only on $A$) such that
the second inequality in~\eqref{eq:cat-lily} follows from
$$
(\log\check q)^{1/2}(\log\log\check q)^{7/4}\le C_A\log\cqfl
\qquad(\check q> c_4(\tau_A)),
$$
and this inequality is also clear (for large $c_4(\tau)$) by~\eqref{eq:funny-definition}.

The preceding argument shows that every character $\chi\in\cA$
satisfies~\eqref{eq:eta condition} and the condition $(i)$
of Lemma~\ref{lem:Iwaniec1}. 

We claim that the condition $(ii)$ also holds when
\be\label{eq:screech}
P(q)^{Q_A}<q
\ee
holds with some suitably large number $Q_A>0$.
To prove the claim,
suppose that \eqref{eq:screech} holds. Since $\chi\in\cA$
we see that $L(s,\chi)$ has a zero $\beta+i\gamma$ satisfying
$$
\beta>1-\frac{k_0}{\log q},\qquad
|\gamma|\le \min\{q,\check q^{\tau_A}\}.
$$
Since $T\defeq\check q^{\tau_A}$, it follows that
$$
\beta>1-\vartheta,\qquad|\gamma|\le T,
$$
and so the condition $(ii)$ is satisfied, provided that
$$
k_0\le \vartheta\log q=\frac{\log q}{400\Theta}
=(1+o(1))\frac{\log q}{400\log\cqfl}\qquad( c_4(\tau_A)\to\infty).
$$
Since $\cqfl\le\qfl$, for large enough $ c_4(\tau_A)$ it suffices to
have $q>\qfl^{500k_0}$. Recalling the definition of $\qfl$,
we see that  a value 
$$
Q_A \ge 500000k_0=500000(4A\log A+D)
$$ 
in~\eqref{eq:screech} ensures that the condition $(ii)$
of Lemma~\ref{lem:Iwaniec1} holds.

Above, we have shown that any characters in $\cA$ satisfy all of
the conditions of Lemma~\ref{lem:Iwaniec1}. By the lemma,
there is at most one nonprincipal character
meeting these conditions, thus we obtain 
\eqref{eq:A le 1}. This completes the proof of
Theorem~\ref{thm:chang-style}.

\subsection{Proof of Theorem~\ref{thm:siegel-style}}

Let $\cQ$ be the set of numbers that satisfy the
conditions of Theorem~\ref{thm:siegel-style} with
some large $Q_A>0$. Let $q\in\cQ$ with $q>Q_A$,
and observe that the condition~\eqref{eq:restrictions2}
of Theorem~\ref{thm:siegel-style} agrees with the 
condition~\eqref{eq:SQA-ass} of Hypothesis~$\soundconj{\cQ,A}$.

If $Q_A^{-1}\le c_1$, then
Lemma~\ref{lem:gron-land-titch} asserts that there is
at most one nonprincipal character $\chi$ modulo $q$ such that
$L(s,\chi)$ has a zero $\beta+i\gamma$ satisfying~\eqref{eq:except-zero-beta}, and in this case
$\beta>1-Q_A^{-1}/\log q$ and $\gamma=0$. Let $\chi_\bullet$
be such a character. Taking $\eps\defeq Q_A^{-1}$,
Lemma~\ref{lem:linnik} asserts that for any nonprincipal character
$\chi\ne\chi_\bullet$ the $L$-function $L(s,\chi)$
does not vanish in the region defined by
$$
\sigma>1-\frac{c_2\log Q_A}{\log q(|t|+3)}.
$$
In particular, if $Q_A$ is large enough, then
$L(s,\chi)$ cannot have a zero
$\beta+i\gamma$ with
$$
\beta>1-\frac{\rf{4A\log A+D}}{\log q},\qquad
|\gamma|\le q;
$$
therefore $\soundconj{\cQ,A}$ is true by
Proposition~\ref{prop:SQA-criterion}.

\end{document}